\definecolor{seagreen}{RGB}{46,139,87}
\definecolor{maroon}{RGB}{128,0,0}
\definecolor{darkviolet}{RGB}{148,0,211}
\definecolor{twelve}{RGB}{100,100,170}
\definecolor{thirteen}{RGB}{100,150,50}
\definecolor{fourteen}{RGB}{200,0,0}
\definecolor{fifteen}{RGB}{0,200,0}
\definecolor{sixteen}{RGB}{0,0,200}
\definecolor{seventeen}{RGB}{200,0,200}
\definecolor{eighteen}{RGB}{0,200,200}
\newcommand{\xMapsto}[2][]{\ext@arrow 0599{\Mapstofill@}{#1}{#2}}
\def\Mapstofill@{\arrowfill@{\Mapstochar\Relbar}\Relbar\Rightarrow}
\newtheorem{thm}{Theorem}[section]
\newtheorem*{theorem*}{Theorem}
\newtheorem*{conjecture*}{Conjecture}
\newtheorem*{corollary*}{Corollary}
\newtheorem{lemma}[thm]{Lemma}
\newtheorem{corollary}[thm]{Corollary}
\newtheorem{proposition}[thm]{Proposition}
\theoremstyle{definition}
\newtheorem{example}[thm]{Example}
\newtheorem{remark}[thm]{Remark}
\newtheorem{question}[thm]{Question}
\numberwithin{equation}{section} 
\def\r{\mathbb{R}}
\def\t{\mathbb{T}}
\def\z{\mathbb{Z}}
\def\cp{\mathcal{P}}
\def\coker{\operatorname{coker}}
\def\num{\operatorname{num}}
\definecolor{llteal}{RGB}{198,232,227}
\definecolor{llred}{RGB}{237,228,228}
\definecolor{llgray}{RGB}{230,230,230}
\definecolor{maroon}{RGB}{150,0,0}
\definecolor{orange}{RGB}{255,165,0}
\newcommand{\highlight}[1]{\ifmmode{\text{\sethlcolor{llgray}\hl{$#1$}}}\else{\sethlcolor{llred}\hl{#1}}\fi}
\author{J.D. Quigley}\address{Max Planck Institute for Mathematics}\email{jquigley1993@gmail.com}
\title{Some smooth circle and cyclic group actions on exotic spheres}
\begin{document}
\maketitle

\begin{abstract}
Classical work of Lee, Schultz, and Stolz relates the smooth transformation groups of exotic spheres to the stable homotopy groups of spheres. In this note, we apply recent progress on the latter to produce smooth circle and cyclic group actions on certain exotic spheres. 
\end{abstract}

\tableofcontents

\section{Introduction}

One appealing feature of spheres is their high degree of symmetry. For example, for all $n \geq 1$, the $n$-sphere $S^n$ equipped with its standard smooth structure has smooth rotational symmetry, i.e., it supports a nontrivial smooth action of the circle group $\t$. More generally, by regarding $S^n$ as the unit sphere in $\r^{n+1}$, we see that $S^n$ admits a smooth $SO(n+1)$-action. In a precise sense, cf. \cite[Sec. 0]{Str94}, this implies that spheres are the ``most symmetric" of all smooth manifolds. 

However, if we consider \emph{exotic} $n$-spheres, i.e., spheres which are homeomorphic but not diffeomorphic to  $S^n$, then this high degree of symmetry is usually not present, cf. \cite{Hsi67, HH67, HH69, LY74, Str94}. In 1985, Schultz \cite{Sch85} highlighted the following questions concerning the rotational symmetry of exotic spheres:

\begin{question}[{\cite{Sch85}}]\label{Question:Schultz}
Let $\Sigma^n$ be an exotic $n$-sphere, $n \geq 5$. Does $\Sigma^n$ support a nontrivial smooth $\t$-action? Does $\Sigma^n$ support a nontrivial smooth $\z/p$-action for every prime $p$? 
\end{question}

Bredon \cite{Bre67}, Schultz \cite{Sch75}, and Joseph \cite{Jos81} have produced examples of nontrivial smooth $\t$-actions on certain $(8k+1)$- and $(8k+2)$-dimensional exotic spheres. Schultz has also shown that every $8$- and $10$-dimensional exotic sphere admits a smooth semi-free $\t$-action with fixed point set $S^4$ \cite{Sch72} and that for each odd prime $p$, a certain $(2p^2-2p-2)$-dimensional exotic sphere admits a nontrivial smooth $\t$-action \cite{Sch73}; related examples are discussed in \cite{Sch85}, cf. \cref{Cor:Schultz}. Nonexistence results, as well as the existence of some nontrivial smooth $\z/p$-actions, are discussed in \cite{Sch78}. 

The purpose of this note is to provide some positive answers to \cref{Question:Schultz}. Our approach is as follows. Classical results of Lee and Schultz (\cref{Thm:Manyp}) and Schultz and Stolz (\cref{Thm:SS}) link \cref{Question:Schultz} to the stable homotopy groups of spheres, the Kervaire--Milnor sequences \cite{KM63}, and the Mahowald invariant \cite{MR93}. Using recent progress on the stable homotopy groups of spheres and Mahowald invariants, we are able to prove the existence of nontrivial smooth $\t$- and $\z/p$-actions on many exotic spheres up to dimension $100$. We also describe arithmetic conditions on the pair $(n,p)$ under which every exotic $n$-sphere admits a smooth free $\z/p$-action. 

The first result we will use was proven by C.N. Lee in \cite{Lee68} and refined by Schultz in \cite{Sch85}:

\begin{thm}[{\cite{Lee68}, \cite[Thm. 1.9]{Sch85}}]\label{Thm:Manyp}
Let $\Sigma^n$ be an exotic sphere. For all but finitely many primes $p$, $\Sigma^n$ supports a nontrivial smooth $\z/p$-action. More precisely, $\Sigma^n$ admits a free smooth $\z/p$-action whenever $p$ is prime to the order of $\Sigma^n$ in $\Theta_n$. 
\end{thm}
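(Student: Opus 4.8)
The plan is to combine the finiteness of $\Theta_n$ with an equivariant connected-sum construction anchored on a linear action of $\z/p$ on the standard sphere. Let $d$ denote the order of $[\Sigma^n]$ in $\Theta_n$ and assume $\gcd(p,d)=1$. The first, purely algebraic, step is to observe that multiplication by $p$ is an automorphism of the cyclic group $\langle [\Sigma^n]\rangle \cong \z/d$, so there exists a homotopy sphere $\Sigma'$ with $p\,[\Sigma'] = [\Sigma^n]$ in $\Theta_n$. Since $d$ divides the finite order $|\Theta_n|$, this coprimality holds automatically for every prime $p\nmid|\Theta_n|$, i.e.\ for all but finitely many $p$, and a free action is in particular nontrivial; this is how the refined statement yields the first assertion of the theorem.

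Next I would build the action geometrically. Endow $S^n$ with a free linear $\z/p$-action and let $L = S^n/(\z/p)$ be the resulting lens space; such an action exists for all odd $n$, and, via the antipodal map, for $p=2$ in every dimension. Forming the connected sum $L \# \Sigma'$ leaves the fundamental group equal to $\z/p$, so its universal cover $\widetilde{\Sigma}$ carries a free smooth $\z/p$-action by deck transformations. The heart of the argument is identifying $\widetilde{\Sigma}$: the open disk removed from $L$ lifts to the $p$ disks of a single free orbit in $S^n$, so $\widetilde{\Sigma}$ is obtained from $S^n$ by performing $p$ equivariantly arranged connected sums with $\Sigma'$, and therefore represents the class $p\,[\Sigma'] = [\Sigma^n]$ in $\Theta_n$.

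Finally I would invoke the hypothesis $n \geq 5$: homotopy $n$-spheres representing the same element of $\Theta_n$ are diffeomorphic, by the $h$-cobordism theorem, so $\widetilde{\Sigma}$ is diffeomorphic to $\Sigma^n$ and the deck action transports to the desired free smooth $\z/p$-action on $\Sigma^n$. The step I expect to be most delicate is the identification of the universal cover as the class $p\,[\Sigma']$, which hinges on orientations: the $p$ lifted copies of $\Sigma'$ must be glued with mutually compatible orientations, and this holds exactly when the deck transformations preserve orientation, i.e.\ when $L$ is orientable. This is the case for all odd $n$ (which already covers odd $p$, since an Euler-characteristic count $1+(-1)^n = p\cdot\chi(S^n/(\z/p))$ forces $n$ to be odd when $p$ is odd) and for $p=2$ with $n$ odd. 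The lone remaining case $p=2$ with $n$ even is genuinely different---the antipodal involution reverses orientation, and the naive construction degenerates to $[\Sigma'] + [-\Sigma'] = 0$---so it would require a separate treatment.
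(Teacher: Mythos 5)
The paper itself offers no proof of \cref{Thm:Manyp}; it is imported wholesale from Lee and Schultz. Your argument is, however, essentially the classical proof behind the citation: solve $p\,[\Sigma']=[\Sigma^n]$ in the cyclic group generated by $[\Sigma^n]$ using coprimality, perform the connected sum with $\Sigma'$ equivariantly at the $p$ points of a free orbit of a linear $\mathbb{Z}/p$-action on $S^n$ (equivalently, form $L\#\Sigma'$ and pass to the universal cover), and use the $h$-cobordism theorem to identify the resulting homotopy sphere with $\Sigma^n$ and transport the deck action. Each step you give is correct, including the identification of the universal cover of $L\#\Sigma'$ with the $p$-fold equivariant connected sum and the observation that this identification requires the deck transformations to preserve orientation.

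The one substantive issue is the case you set aside, and it is worth being precise about its status. For $n$ even and $p$ odd, your Euler-characteristic count shows not merely that your construction is unavailable but that \emph{no} free smooth $\mathbb{Z}/p$-action exists on any homotopy $n$-sphere, since $\chi(\Sigma^n)=2$; so the ``more precisely'' clause of the theorem, read literally for all $n$, is false rather than merely harder, and must implicitly carry the restriction that $n$ is odd or $p=2$. (This same caveat should be attached to the paper's later claims, e.g.\ in \cref{Thm:Little} and the tables, that every exotic $n$-sphere with $n$ even admits free $\mathbb{Z}/p$-actions for odd $p$; your remark usefully exposes that imprecision.) For $p=2$ and $n$ even your construction degenerates to $[\Sigma']+[-\Sigma']=0$ exactly as you say, and a genuinely different argument (surgery on homotopy projective spaces in the style of Browder--Livesay and L\'opez de Medrano) is required; since you explicitly leave that case open, your proof establishes the theorem only for $n$ odd. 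That is the case relevant to every application made in this paper, so the gap is peripheral, but it should be flagged as a restriction on the statement rather than silently absorbed.
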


Here, $\Theta_n$ is the group of h-cobordism classes of homotopy $n$-spheres introduced by Kervaire and Milnor in \cite{KM63}. The group $\Theta_n$ sits in exact sequences with $\Theta_n^{bp}$, the subgroup of h-cobordism classes of homotopy $n$-spheres which bound stably parallelizable manifolds, and $\coker(J_n)$, the cokernel of the J-homomorphism. 

In \cref{Sec:KM}, we apply results on the order of $\Theta^{bp}_n$ from \cite{KM63, Lev85} and $\coker J_n$ from \cite{BHHM20, BMQ22, IWX20, Rav86} to study the order of $\Theta_n$. In \cref{Thm:Little} and \cref{Cor:Funny}, we provide arithmetic conditions on the dimension $n$ and odd prime $p$ under which every exotic $n$-sphere admits a smooth free $\z/p$-action. In \cref{App:Tables}, we compute the prime factors of $\Theta_n$ for all $n \leq 100$; combined with \cref{Thm:Manyp}, this allows us to deduce the existence of nontrivial smooth $\z/p$-actions on many exotic spheres:

\begin{thm}
The prime factors of $\Theta^{bp}_n$ and $\coker J_n$ are listed for $n \leq 100$ in \cref{App:Tables}. Every exotic $n$-sphere admits a smooth free $\z/p$-action for each prime $p$ which does not appear in row $n$. 
\end{thm}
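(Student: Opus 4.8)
The plan is to combine the Kervaire--Milnor structural results with the input computations to pin down which primes can divide $|\Theta_n|$, and then to feed the conclusion into \cref{Thm:Manyp}. The two assertions in the statement---that \cref{App:Tables} records the correct prime factors, and that absence from row $n$ guarantees a free $\z/p$-action---are handled in turn, with the first being the computational heart and the second a short deduction.

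First I would recall the structural results of \cite{KM63}. The Pontryagin--Thom construction places $\Theta_n$ in an exact sequence
\begin{equation*}
0 \to \Theta^{bp}_n \to \Theta_n \xrightarrow{p_n} \coker J_n,
\end{equation*}
where $\Theta^{bp}_n = bP_{n+1}$ is the subgroup of homotopy spheres bounding stably parallelizable manifolds and the cokernel of $p_n$ has order at most $2$. Thus $\Theta_n$ is an extension of a subgroup of $\coker J_n$ by $\Theta^{bp}_n$, and in particular $|\Theta_n|$ divides $|\Theta^{bp}_n| \cdot |\coker J_n|$. The upshot is that every prime dividing $|\Theta_n|$ already divides $|\Theta^{bp}_n|$ or $|\coker J_n|$. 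I emphasize that this implication runs in exactly the direction I need and is unaffected by the order-$2$ ambiguity in the Kervaire dimensions, since that ambiguity can only shrink $|\Theta_n|$ below the product.

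Next I would assemble the two sets of primes making up row $n$ of \cref{App:Tables}. The order $|\Theta^{bp}_n|$ vanishes when $n$ is even; for $n = 4k-1$ it is given by the classical Bernoulli-number formula of \cite{KM63} refined in \cite{Lev85}, and for $n = 4k+1$ it is $0$ or $\z/2$. In each case the prime factorization is explicit, so its prime factors can simply be read off. For $\coker J_n$ with $n \leq 100$, I would extract the prime factors from the computations of the stable homotopy groups of spheres and the image of $J$ in \cite{Rav86, IWX20, BHHM20, BMQ22}, recording them in the tables.

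Finally, given a prime $p$ absent from row $n$, it divides neither $|\Theta^{bp}_n|$ nor $|\coker J_n|$, hence $p \nmid |\Theta_n|$ by the divisibility above; so $p$ is coprime to the order of every exotic $n$-sphere $\Sigma^n$ in $\Theta_n$, and \cref{Thm:Manyp} supplies a free smooth $\z/p$-action on $\Sigma^n$. The main obstacle is not this final deduction but the accurate identification of $\coker J_n$ for all $n \le 100$: one must correctly separate the image of $J$ from its cokernel prime-by-prime and handle the dimensions controlled by the Kervaire invariant, which is precisely the delicate content packaged into \cref{App:Tables}.
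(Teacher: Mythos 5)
Your proposal is correct and follows essentially the same route as the paper: the Kervaire--Milnor sequences show that $\Theta_n/\Theta_n^{bp}$ injects into $\coker J_n$, so every prime dividing $|\Theta_n|$ divides $|\Theta_n^{bp}|$ or $|\coker J_n|$, the tables are assembled from \cref{Thm:Levine} and the cited computations of $\coker J_n$, and the conclusion follows from \cref{Thm:Manyp}. The only difference is cosmetic packaging of the three Kervaire--Milnor sequences into a single statement, which changes nothing.
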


\begin{example}\label{Exm:LS23}
The prime factors of $\coker(J_{23})$ are $2$ and $3$, and the prime factors of $\Theta_{23}^{bp}$ are $2$, $23$, $89$, and $691$. Therefore each exotic $23$-sphere admits a smooth free $\z/p$-action for all primes $p \notin \{2, 3, 23, 89, 691\}$. 
\end{example}

\Cref{Thm:Manyp} leaves open the question of nontrivial smooth $\z/p$-actions on exotic $n$-spheres whose order in $\Theta_n$ is divisible by $p$. Using beautiful results of Schultz and Stolz, it is sometimes possible to prove that such exotic spheres admit nontrivial smooth $\t$- or $\z/p$-actions. To state their results, let $\cp(-)$ denote the Pontryagin--Thom construction identifying framed bordism classes of stably framed smooth $n$-manifolds with classes in the $n$-th stable homotopy group of spheres $\pi_n^s$ (cf. \cite[Sec. IX.5]{Kos93}), and let $M(-)$ be the Mahowald invariant \cite{MR93} which associates a nontrivial coset in the stable homotopy groups of spheres to any nontrivial element in the stable homotopy groups of spheres. 

\begin{thm}[{\cite[Thm. 3.7]{Sch85} for $p$ odd, \cite[Thm. D]{Sto88} for $p=2$}]\label{Thm:SS} \

\begin{enumerate}
\item Let $p$ be an odd prime. Let $0 \neq \alpha \in \pi_n^s$ and $\beta = M(\alpha) \in \coker J_m$. Suppose $\Sigma_0$ is a framed sphere with $\cp(\Sigma_0) = \alpha$. 
\begin{enumerate}
\item If $m-n$ is even, then there is an exotic sphere $\Sigma_1$ such that $\cp(\Sigma_1) = \beta$ and $\Sigma_1$ admits a smooth $\z/p$-action with fixed point set $\Sigma_0$. 
\item If $m-n$ is odd, then there is an exotic sphere $\Sigma_1$ such that $\cp(\Sigma_1)=\beta$ and $\Sigma_1$ admits a smooth $\t$-action with an $(n-1)$-dimensional fixed point set. 
\end{enumerate}

\item Let $p=2$. Let $0 \neq \alpha \in \pi_n^s$ and $\beta = M(\alpha) \in \coker J_m$. Suppose $\Sigma_0$ is a framed sphere with $\cp(\Sigma_0) = \alpha$ and let $\Sigma_1$ be an exotic sphere with $\cp(\Sigma_1) = \beta$. If $m > 2n+1$ and either $m$ and $m-n$ are both odd or $m-n$ is even and $m \equiv 1 \mod 4$, then there exists a smooth $\z/2$-action on the connected sum $\Sigma_1 \# \Sigma'$, $\Sigma' \in \Theta^{bp}_{m+1}$, with fixed point set $\Sigma_0$. 
\end{enumerate}
\end{thm}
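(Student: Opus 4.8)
The statement is quoted from Schultz and Stolz, so I will reconstruct the geometric argument that underlies it. The plan is to realize the Mahowald (root) invariant geometrically and then convert the resulting equivariant manifold into a homotopy sphere by surgery. First I would recall the definition of $M(-)$ in terms of stunted classifying spaces. For $p=2$, one uses Lin's theorem, which identifies the $2$-complete sphere with $\lim_k \Sigma^k \mathbb{RP}^\infty_{-k}$; the invariant $M(\alpha)$ is extracted by lifting $\alpha$ through the tower of stunted projective spectra $\mathbb{RP}^\infty_{-k}$ and reading off the image in the bottom cell. For odd $p$ one replaces $\mathbb{RP}^\infty$ by the stunted classifying spaces of $B\z/p$ and invokes the Segal conjecture for $\z/p$. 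The essential observation is that these stunted spectra are Thom spectra of multiples $j\gamma$ of the tautological line bundle over $B\z/p$ (resp. $\mathbb{RP}^\infty$), and that such Thom spectra are exactly the objects produced by the normal data of a free $\z/p$- (resp. $\z/2$-) action in a neighborhood of a fixed-point stratum.

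Next I would translate the relation $\beta = M(\alpha)$ into equivariant geometry via an equivariant Pontryagin--Thom construction. Starting from the framed sphere $\Sigma_0$ of dimension $n$ representing $\alpha$, the root-invariant tower produces a map of a sphere into a stunted lens/projective space whose bottom cell carries $\alpha$ and whose total class is $\beta$. Under the Thom-space identification, this datum is the same as a smooth manifold $W^m$ equipped with a smooth $\z/p$-action (resp. a circle action obtained by thickening the normal representation) whose fixed set is $\Sigma_0$, whose normal representation at the fixed set is the free representation of dimension $m-n$ recorded by the line-bundle multiplicity, and whose underlying framed class satisfies $\cp(W)=\beta$. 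The parity hypotheses enter exactly here. For $p$ odd, $\z/p$ acts freely only on odd-dimensional spheres, so the normal sphere $S^{m-n-1}$ forces $m-n$ even in case (1a); a circle acts freely on $S^{m-n}$ precisely when $m-n$ is odd, which gives case (1b) after dropping the fixed set by one dimension to $S^{n-1}$ (so that the normal sphere becomes the odd-dimensional $S^{m-n}$). For $p=2$ the antipodal action exists on every sphere, so there the numerical hypotheses instead govern the surgery step below rather than the mere existence of a free normal action.

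The manifold $W$ so produced is typically not a homotopy sphere, so I would then perform equivariant surgery on the free part $W\setminus\Sigma_0$ — equivalently, surgery on the quotient, a manifold whose boundary is a lens-space bundle over $\Sigma_0$ — to kill homotopy below the middle dimension while keeping the action and the fixed set $\Sigma_0$ undisturbed. The dimension bound $m>2n+1$ is what guarantees enough room to carry out this surgery away from the fixed stratum; the congruence $m\equiv 1 \bmod 4$, together with the stated parities of $m$ and $m-n$, is what controls the middle-dimensional surgery obstruction (the signature/Kervaire--Arf invariant). The residual obstruction to reaching an honest homotopy sphere is a framed homotopy sphere bounding a parallelizable manifold, which is precisely why the $p=2$ conclusion is phrased for the connected sum $\Sigma_1 \# \Sigma'$ with $\Sigma' \in \Theta^{bp}_{m+1}$.

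The main obstacle is the surgery step: ensuring that the equivariant surgeries can be completed below the middle dimension in the presence of both the group action and the fixed stratum, and then identifying the resulting homotopy sphere precisely as an exotic sphere whose Pontryagin--Thom class is $\beta$ up to a summand in $\Theta^{bp}$. This is the technical heart of Stolz's analysis of highly connected manifolds and their boundaries, and it is where the precise numerical hypotheses $m>2n+1$ and $m\equiv 1 \bmod 4$ are forced rather than merely convenient.
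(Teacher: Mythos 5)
The paper does not actually prove this statement: it is imported verbatim from \cite[Thm.~3.7]{Sch85} (for $p$ odd) and \cite[Thm.~D]{Sto88} (for $p=2$), and the only ``proof'' in the paper is the citation. So there is no in-paper argument to measure your proposal against, and your reconstruction has to be judged against the original sources.

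On that score, your sketch does capture the actual strategy of Schultz and Stolz: the Mahowald/root invariant is read off from the tower of stunted projective (resp.\ lens) spectra via Lin's theorem and the Segal conjecture; those stunted spectra are Thom spectra of multiples of the tautological line bundle, which is exactly the normal data of a semifree action near its fixed stratum; an equivariant Pontryagin--Thom construction converts the lifting of $\alpha$ through the tower into a manifold with the prescribed action and fixed set; and equivariant surgery on the free part produces the homotopy sphere, with the $\Theta^{bp}_{m+1}$ ambiguity in the $p=2$ case arising from the middle-dimensional surgery obstruction. Your parity bookkeeping is also correct: for $p$ odd the normal sphere $S^{m-n-1}$ admits a free $\z/p$-action only when $m-n$ is even, and $S^{m-n}$ admits a free $\t$-action only when $m-n$ is odd, matching cases (1a) and (1b). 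That said, what you have written is an outline, not a proof: the entire technical content --- that the surgery can be completed equivariantly below the middle dimension under the hypothesis $m>2n+1$, and that the congruence $m\equiv 1 \bmod 4$ kills the relevant Browder--Livesay/Arf-type obstruction --- is named in your last paragraph but not carried out, and this is precisely the part that occupies most of Stolz's work. As a citation-level justification of the theorem your account is sound; as a self-contained proof it is incomplete at exactly the step you identify.
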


\begin{remark}
We believe there is a slight ambiguity in \cite[Thm. 3.7]{Sch85}. On page 248 of \emph{loc. cit.}, Schultz takes the Pontryagin--Thom construction to be a map from $\Theta_n/\Theta_n^{bp}$ to $\coker J_n$ and defines the Mahowald invariant (page 259, \emph{loc. cit}) only on elements in $\coker J$. However, the Pontryagin--Thom isomorphism and Mahowald invariant apply to \emph{any} element in the stable homotopy groups of spheres, not just elements in $\coker J$. 

We believe a more applicable version of \cite[Thm. 3.7]{Sch85} appears as \cite[Thm. C]{Sto88}. There, one begins with a pair of framed spheres $\Sigma_0$ and $\Sigma_1$, identifies them with elements in the stable homotopy groups of spheres via the Pontryagin--Thom construction, and assumes that $M(\Sigma_0) = \Sigma_1$. In particular, Schultz's principle example \cite[Ex. 3.8]{Sch85} works under these weaker hypotheses. 
\end{remark}

In \cref{Sec:MI}, we recall many Mahowald invariant computations which were made after the original appearance of \cref{Thm:SS} and apply them to prove the existence of nontrivial smooth $\t$- and $\z/p$-actions on certain exotic spheres whose order in $\Theta_n$ is divisible by $p$. 

\begin{thm}
Some combinations of dimensions $n$ and primes $p$ for which \cref{Thm:SS} implies the existence of a nontrivial smooth $\t$- or $\z/p$-action on an exotic $n$-sphere whose order in $\Theta_n$ is divisible by $p$ are marked with an asterisk $*$ in \cref{App:Tables}. 
\end{thm}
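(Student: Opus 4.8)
The plan is to treat this as a systematic verification combining the Mahowald invariant computations recalled in \cref{Sec:MI} with the order computations underlying \cref{App:Tables}. Each such computation records $M(\alpha) = \beta$ for some $0 \neq \alpha \in \pi_n^s$ and $\beta \in \coker J_m$; I would feed each pair $(\alpha, \beta)$ into \cref{Thm:SS}. Via the Pontryagin--Thom construction, $\beta$ determines an exotic sphere $\Sigma_1$ of dimension $m$ with $\cp(\Sigma_1) = \beta$, so the relevant entry lies in row $m$ of the tables, which are indexed by the dimension of the exotic sphere (written $n$ in the present statement). An asterisk in that entry records that \emph{some} exotic $m$-sphere of order divisible by $p$ admits the asserted $\t$- or $\z/p$-action.

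For each computation, the first step is to check the arithmetic hypotheses of \cref{Thm:SS}. For an odd prime $p$ I would compute the parity of $m - n$: if $m - n$ is even, part (1)(a) produces a $\z/p$-action with fixed point set $\Sigma_0$, while if $m - n$ is odd, part (1)(b) produces a $\t$-action with $(n-1)$-dimensional fixed point set. For $p = 2$ I would additionally verify the range condition $m > 2n + 1$ together with the parity condition that either $m$ and $m - n$ are both odd or $m - n$ is even with $m \equiv 1 \bmod 4$; when these hold, part (2) yields a $\z/2$-action on $\Sigma_1 \# \Sigma'$ for a suitable $\Sigma' \in \Theta^{bp}_{m+1}$.

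The second step is to confirm that the exotic sphere obtained has order divisible by $p$ in $\Theta_m$, which is exactly the condition separating these examples from those already handled by \cref{Thm:Manyp}. Here I would use the Kervaire--Milnor sequence relating $\Theta^{bp}_m$, $\Theta_m$, and $\coker J_m$: the quotient map $\Theta_m \to \coker J_m$ sends $\Sigma_1 \mapsto \beta$, so the order of $\beta$ in $\coker J_m$ divides the order of $\Sigma_1$ in $\Theta_m$. It therefore suffices to check that the coset $\beta$ has order divisible by $p$, which can be read off from the $p$-primary computations of $\coker J_m$ assembled in \cref{App:Tables}. In the connected-sum case ($p = 2$), modifying $\Sigma_1$ by an element of $\Theta^{bp}_{m+1}$ leaves the image $\beta \in \coker J_m$ unchanged, so the divisibility conclusion is unaffected.

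The main obstacle is arranging all these constraints simultaneously: a given Mahowald invariant computation contributes an asterisk only when the source--target parity matches one of the cases of \cref{Thm:SS} \emph{and} the target $\beta$ is $p$-divisible in $\coker J_m$. In practice only a sparse collection of pairs $(\alpha, \beta)$ meets both requirements, so the real content is the case-by-case bookkeeping: for each computation I would identify which of the three cases of \cref{Thm:SS} applies, verify the order condition, and mark the corresponding entry in row $m$. I expect the $p = 2$ entries to be the most delicate because of the stronger range hypothesis $m > 2n + 1$ and the connected-sum correction, whereas the odd-primary entries should follow more uniformly from parity considerations alone.
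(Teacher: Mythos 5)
Your proposal is correct and matches the paper's method: the paper likewise proves this by feeding each recalled Mahowald invariant computation $M(\alpha)=\beta$ into \cref{Thm:SS}, checking the parity (and, for $p=2$, range) hypotheses case by case in the corollaries of \cref{Sec:MI}, and confirming via the Kervaire--Milnor sequences that the target class $\beta$ forces $p$ to divide the order of the corresponding exotic sphere before placing an asterisk in row $m$ of \cref{App:Tables}. The only point you gloss over is that in dimensions $\equiv 2 \bmod 4$ the map runs $\Theta_m \hookrightarrow \coker J_m$ rather than the other way, and one must check the class has trivial Kervaire invariant to detect an exotic sphere at all (the paper's double-asterisk remark for $n=30$), but this does not affect the divisibility argument.
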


\begin{example}
In \cref{Exm:LS23}, we mentioned that \cref{Thm:Manyp} cannot be used to produce a nontrivial smooth $\z/3$-action on certain exotic $23$-spheres since $3$ divides the order of $\Theta_{23}$. However, Behrens \cite{Beh06} showed that 
$$M(\alpha_2) \doteq \beta_1^2 \alpha_1,$$
where $\alpha_2 \in (\pi_7^s)_{(3)}$ and $\beta_1^2 \alpha_1 \in (\pi_{23}^s)_{(3)}$. Applying the odd-primary part of \cref{Thm:SS}, we find that the exotic $23$-sphere corresponding to $\beta_1^2 \alpha_1$, whose order in $\Theta_{23}$ is divisible by $3$, supports a nontrivial smooth $\z/3$-action with fixed points the standard $7$-sphere. 
\end{example}

Some suggestions for future work and remarks on the limitations of these techniques appear in \cref{SS:Further}.

\subsection{Acknowledgments}

The author thanks Guchuan Li, Stephan Stolz, and Paul VanKoughnett for helpful discussions, as well as the Max Planck Institute for providing a wonderful working environment and financial
support. 

\section{Smooth free $\z/p$-actions via the Kervaire--Milnor sequences}\label{Sec:KM}

In order to apply \cref{Thm:Manyp}, we need to analyze the orders of the groups $\Theta_n$ of homotopy $n$-spheres. Kervaire and Milnor showed in \cite{KM63} that there is an isomorphism
$$\Theta_{4k} \cong \coker J_{4k}$$
and exact sequences
$$0 \to \Theta^{bp}_{2k+1} \to \Theta_{2k+1} \to \coker J_{2k+1} \to 0,$$
$$0 \to \Theta_{4k+2} \to \coker J_{4k+2} \xrightarrow{\Phi} \z/2 \to \Theta^{bp}_{4k+1} \to 0,$$
where $\Phi$ is the Kervaire invariant \cite{Ker60}. In this section, we will apply information about $\Theta^{bp}_n$ and $\coker J$ to analyze the order of $\Theta_n$. Our main result (\cref{Thm:Little}) provides arithmetic conditions on the natural number $n$ and odd prime $p$ under which every exotic $n$-sphere admits a smooth free $\z/p$-action. 

Our first lemma allows us to ignore contributions from $\coker J$, provided that $n$ is sufficiently small. 

\begin{lemma}
Let $p$ be an odd prime. Then 
$$(\coker J_n)_{(p)} = 0$$
for $n < 2(p^2-1)-2(p-1)-2$.
\end{lemma}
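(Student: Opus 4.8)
The plan is to recognize the numerical bound as the stem of the first $\beta$-family element and then reduce the statement to the known structure of the $p$-primary stable stems in that range. First I would simplify the hypothesis: expanding gives $2(p^2-1)-2(p-1)-2 = 2p^2 - 2p - 2$, which is precisely the stem in which the element $\beta_1$ lives at an odd prime $p$. The assertion is therefore equivalent to the classical fact that $\beta_1$ is the first class in $\pi_*^s$ lying outside the image of $J$.

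To prove this, I would work with the $p$-local Adams--Novikov spectral sequence $E_2^{s,t} = \Ext^{s,t}_{BP_*BP}(BP_*, BP_*) \Rightarrow (\pi^s_{t-s})_{(p)}$. The $0$-line ($s = 0$) contributes only $\zlocp$ in stem $0$. The $1$-line ($s = 1$) is Novikov's computation of $\Ext^1_{BP_*BP}$, which at odd primes is generated by the $\alpha$-family $\alpha_{i/j}$ and reproduces exactly the image of $J$: it is cyclic of order $p^{1+\nu_p(k)}$ in stem $2k(p-1)-1$ and vanishes in all other stems. Since the $1$-line consists of permanent cycles detecting $\im J$ in this range, every class detecting a nonzero element of $\coker J$ must have Adams--Novikov filtration $s \geq 2$.

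It then remains to show that $\Ext^{s,t}_{BP_*BP}$ vanishes for all $s \geq 2$ in stems $t - s < 2p^2 - 2p - 2$, so that no filtration-$\geq 2$ class, and hence no element of $(\coker J)_{(p)}$, appears below the stem of $\beta_1$. This is the substance of the argument and I expect it to be the main obstacle: it rests on the Miller--Ravenel--Wilson computation of the $2$-line $\Ext^2_{BP_*BP}$ together with the sparseness of the spectral sequence at odd primes, which together identify $\beta_1 \in \Ext^2$ in stem $2p^2 - 2p - 2$ as the lowest-stem class of positive filtration beyond the $\alpha$-line. For the purposes of this note these facts may be quoted directly from \cite{Rav86}. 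Combining the identification of the $1$-line with $\im J$ and this vanishing yields $(\coker J_n)_{(p)} = 0$ for all $n < 2p^2 - 2p - 2$, as claimed.
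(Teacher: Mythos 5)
Your proof is correct and takes essentially the same route as the paper: the paper's entire proof is a citation of \cite[Sec.~5.3]{Rav86} for the fact that the first nontrivial element of $(\coker J_*)_{(p)}$ is $\beta_1$ in stem $2(p^2-1)-2(p-1)-2 = 2p^2-2p-2$, and your Adams--Novikov unpacking ($0$-line, $1$-line detecting $\im J$, Miller--Ravenel--Wilson $2$-line beginning with $\beta_1$, with the remaining vanishing quoted from \cite{Rav86}) is precisely the reasoning behind that citation.
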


\begin{proof}
By \cite[Sec. 5.3]{Rav86}, the first nontrivial element in $(\coker J_*)_{(p)}$ is the Greek letter element $\beta_1$, which appears in stem $2(p^2-1)-2(p-1)-2$.
\end{proof}

We now turn out attention to $\Theta_n^{bp}$. The following theorem is proven in \cite{KM63, Lev85}.

\begin{thm}\label{Thm:Levine}
The subgroup $\Theta_n^{bp} \subseteq \Theta_n$ of h-cobordism classes of homotopy $n$-spheres which bound a stably parallelizable manifold is given by
\[
\Theta_n^{bp} \cong \begin{cases}
	0 \quad & \text{ if } n \equiv 0 \mod 2, \\
	0 \quad & \text{ if } n=4k+1\in \{1,5,13,29,61, \text{ and possibly }125\}, \\
	\z/2 \quad & \text{ if } n=4k+1 \notin \{1,5,13,29,61, \text{ and possibly } 125\}, \\
	\z/t_k \quad & \text{ if } n=4k-1, 
\end{cases}
\]
where 
$$t_k := \dfrac{3-(-1)^{k}}{2} 2^{2k-2}(2^{2k-1}-1) \cdot \operatorname{num}\left( \dfrac{B_{2k}}{4k} \right)$$
with $B_j$ the $j$-th Bernoulli number. 
\end{thm}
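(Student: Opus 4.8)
\emph{Proof proposal.} The plan is to carry out the surgery-theoretic computation of Kervaire and Milnor, case by case in $n \bmod 4$, supplemented by the determination of the relevant arithmetic constants due to Levine and by the resolution of the Kervaire invariant one problem. In each case one writes a homotopy $n$-sphere bounding a stably parallelizable manifold as $\Sigma^n = \partial W^{n+1}$ with $W$ stably parallelizable, and performs framed surgery on $W$ below the middle dimension (which does not alter the boundary) to make $W$ as highly connected as possible. The class of $\Sigma$ in $\Theta_n^{bp}$ is then governed entirely by the middle-dimensional surgery obstruction of $W$. When $n$ is even, $\dim W = n+1$ is odd, the odd-dimensional surgery obstruction group vanishes, and $W$ may be surgered to a contractible manifold; hence $\Sigma$ bounds a contractible manifold and is standard, giving $\Theta_n^{bp}=0$.

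When $n = 4k-1$, so $\dim W = 4k$, the middle-dimensional intersection form is symmetric and the surgery obstruction is measured by the signature $\sigma(W) \in 8\z$. Here $\Theta_{4k-1}^{bp}$ is cyclic, generated by the boundary of Milnor's $E_8$-plumbing, which is parallelizable of signature $8$; and the generator has order equal to $\sigma_{\min}/8$, where $\sigma_{\min}$ is the minimal signature of a \emph{closed} almost-parallelizable $4k$-manifold. Computing $\sigma_{\min}$ is the arithmetic heart of this case: by the Hirzebruch signature theorem the signature of such a manifold equals $s_k\langle p_k,[M]\rangle$ with $s_k = 2^{2k}(2^{2k-1}-1)B_{2k}/(2k)!$ the coefficient of $p_k$ in the $L$-genus (all lower Pontryagin numbers vanishing by almost-parallelizability), while the realizable values of $\langle p_k,[M]\rangle$ are determined by the image of the $J$-homomorphism, via the work of Adams and Milnor--Kervaire. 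Assembling these inputs and clearing denominators of the Bernoulli number produces the stated order $t_k$, with the factor $\frac{3-(-1)^k}{2}$ and the appearance of $\operatorname{num}(B_{2k}/4k)$ both arising from this computation; this is exactly the bookkeeping carried out in \cite{KM63, Lev85}.

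When $n = 4k+1$, so $\dim W = 4k+2$, the middle-dimensional form is skew-symmetric and the surgery obstruction is the Arf--Kervaire invariant, valued in $\z/2$. Consequently $\Theta_{4k+1}^{bp}$ is either $0$ or $\z/2$, and it vanishes precisely when there exists a closed framed $(4k+2)$-manifold of Kervaire invariant one, which then allows the Kervaire sphere to be realized as the boundary of a contractible manifold. Determining for which $k$ such a manifold exists is the main obstacle, since this is exactly the Kervaire invariant one problem: by the combined work of Browder, Mahowald--Tangora, Barratt--Jones--Mahowald, and the theorem of Hill, Hopkins, and Ravenel, closed framed manifolds of Kervaire invariant one exist only in dimensions $4k+2 \in \{2,6,14,30,62\}$ and possibly $126$. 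Translating back through $n = 4k+1$, this gives $\Theta_{4k+1}^{bp} = 0$ exactly for $n \in \{1,5,13,29,61\}$ and possibly $125$, and $\Theta_{4k+1}^{bp} = \z/2$ otherwise, which completes the case analysis.
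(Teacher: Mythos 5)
Your proposal is correct and is precisely the argument the paper relies on: the paper gives no proof of its own, citing only \cite{KM63, Lev85}, and your sketch reconstructs that surgery-theoretic computation case by case, correctly supplemented by the Browder and Hill--Hopkins--Ravenel input needed for the $n=4k+1$ case (which is what produces the ``possibly $125$'' clause). I see no gaps.
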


\begin{corollary}
Let $p$ be an odd prime. For $n < 2(p^2-1)-2(p-1)-2$,
\[
(\Theta_n)_{(p)} \cong 
\begin{cases}
	(\z/t_k)_{(p)} \quad & \text{ if } n=4k-1, \ k \geq 2, \\
	0 \quad & \text{ otherwise.}
\end{cases}
\]
\end{corollary}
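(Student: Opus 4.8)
The plan is to localize the three Kervaire--Milnor sequences at the odd prime $p$ and feed in the two preceding inputs: the Lemma, which kills $\coker J$ $p$-locally in the stated range, and \cref{Thm:Levine}, which computes $\Theta_n^{bp}$. The essential observation is that localization at $p$, i.e.\ $(-)_{(p)} = -\otimes \z_{(p)}$, is an exact functor, so each Kervaire--Milnor sequence remains exact after applying $(-)_{(p)}$; since we assume $n < 2(p^2-1)-2(p-1)-2$, every occurrence of $(\coker J)_{(p)}$ may be replaced by $0$.

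I would then split into cases according to $n \bmod 4$. If $n = 4k$, the isomorphism $\Theta_{4k} \cong \coker J_{4k}$ gives $(\Theta_{4k})_{(p)} \cong (\coker J_{4k})_{(p)} = 0$ directly. If $n = 4k+2$, the four-term sequence $0 \to \Theta_{4k+2} \to \coker J_{4k+2} \to \z/2 \to \Theta^{bp}_{4k+1} \to 0$ exhibits $\Theta_{4k+2}$ as a subgroup of $\coker J_{4k+2}$; localizing, $(\Theta_{4k+2})_{(p)}$ embeds into $(\coker J_{4k+2})_{(p)} = 0$ and so vanishes. (One can equally invoke $\Theta^{bp}_n = 0$ for even $n$ from \cref{Thm:Levine}.) In both even cases the conclusion is $0$, matching the ``otherwise'' branch.

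For odd $n$, the short exact sequence $0 \to \Theta_n^{bp} \to \Theta_n \to \coker J_n \to 0$ localizes to a short exact sequence whose right-hand term is $0$, yielding $(\Theta_n)_{(p)} \cong (\Theta_n^{bp})_{(p)}$. It then remains to read off $(\Theta_n^{bp})_{(p)}$ from \cref{Thm:Levine}. When $n = 4k+1$ we have $\Theta_n^{bp} \in \{0, \z/2\}$, which is $p$-locally trivial since $p$ is odd; this again lands in the ``otherwise'' branch. When $n = 4k-1$ we have $\Theta_n^{bp} \cong \z/t_k$, whence $(\Theta_n)_{(p)} \cong (\z/t_k)_{(p)}$, as claimed.

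The only point requiring a moment's care---and the closest thing to an obstacle---is the boundary value $k = 1$, i.e.\ $n = 3$. There $t_1 = \tfrac{3-(-1)}{2}\cdot 2^{0}\cdot(2^{1}-1)\cdot \num(B_2/4) = 2$, so $(\z/t_1)_{(p)} = 0$ for odd $p$ and $n = 3$ is correctly absorbed into the ``otherwise'' branch; this is precisely why the nontrivial clause is stated only for $k \geq 2$. Beyond this, the corollary is a purely formal consequence of the exactness of $(-)_{(p)}$ together with the two quoted results: all of the arithmetic content---in particular the possible odd prime divisors of $\num(B_{2k}/4k)$ and of $2^{2k-1}-1$---is packaged inside $(\z/t_k)_{(p)}$ and requires no further analysis here.
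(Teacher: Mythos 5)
Your proposal is correct and is exactly the argument the paper intends (the corollary is stated there without proof, as an immediate consequence of localizing the Kervaire--Milnor sequences, the preceding Lemma killing $(\coker J_n)_{(p)}$ in the stated range, and \cref{Thm:Levine}). Your case analysis, including the observation that the $n=4k+1$ and $k=1$ cases are $p$-locally trivial for odd $p$, is accurate and complete.
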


\begin{lemma}
Let $p$ be an odd prime. Then $(\z/t_k)_{(p)} \neq 0$ if and only if $p | 2^{2k-1}-1$ or $p| \num \left( \frac{B_{2k}}{2k} \right)$. 
\end{lemma}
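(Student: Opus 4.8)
The plan is to reduce the statement to a $p$-adic valuation computation on the explicit formula for $t_k$. Since $(\z/t_k)_{(p)} \neq 0$ precisely when $p$ divides $t_k$, I would begin by isolating the odd part of
$$t_k = \frac{3-(-1)^k}{2}\, 2^{2k-2}(2^{2k-1}-1)\cdot \num\!\left(\frac{B_{2k}}{4k}\right).$$
The first point to record is that $\frac{3-(-1)^k}{2} \in \{1,2\}$ and $2^{2k-2}$ are both powers of $2$, so for an odd prime $p$ they contribute nothing to the valuation $v_p(t_k)$. Using that $v_p$ is additive over products, this gives $v_p(t_k) = v_p(2^{2k-1}-1) + v_p\!\left(\num(B_{2k}/4k)\right)$.

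From here, $p \mid t_k$ holds if and only if $v_p(t_k) > 0$, which, by nonnegativity of each summand, occurs if and only if $v_p(2^{2k-1}-1) > 0$ or $v_p(\num(B_{2k}/4k)) > 0$; that is, if and only if $p \mid 2^{2k-1}-1$ or $p \mid \num(B_{2k}/4k)$. This is just Euclid's lemma applied to the factors $a = 2^{2k-1}-1$ and $b = \num(B_{2k}/4k)$.

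The one genuine subtlety—and the only place where the statement's $\num(B_{2k}/2k)$ differs from the formula's $\num(B_{2k}/4k)$—is to reconcile the two numerators. Here I would use that for any nonzero rational $x$ one has $v_p(\num(x)) = \max(v_p(x),0)$, together with the observation that $B_{2k}/4k = \tfrac12\cdot B_{2k}/2k$ differs from $B_{2k}/2k$ only by a factor of $2$. Since $v_p(2) = 0$ for odd $p$, we get $v_p(B_{2k}/4k) = v_p(B_{2k}/2k)$ and hence $v_p(\num(B_{2k}/4k)) = v_p(\num(B_{2k}/2k))$; in particular $p \mid \num(B_{2k}/4k)$ if and only if $p \mid \num(B_{2k}/2k)$. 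Substituting this equivalence into the previous step yields the claimed criterion.

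I do not expect any real obstacle: the argument is elementary once the power-of-$2$ factors are discarded. The only thing requiring care is the factor-of-two bookkeeping between $\num(B_{2k}/4k)$ and $\num(B_{2k}/2k)$, which is precisely why restricting to odd $p$ is essential—at $p = 2$ the two numerators genuinely differ, whereas for odd $p$ their valuations coincide.
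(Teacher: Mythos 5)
Your proposal is correct and follows essentially the same route as the paper: strip off the power-of-$2$ factors (which are invisible to an odd prime $p$), observe that the odd parts of $\num(B_{2k}/4k)$ and $\num(B_{2k}/2k)$ agree, and conclude by the prime divisor property of $p$ applied to the remaining product. The paper compresses this into a short chain of equivalences, while you make the factor-of-two bookkeeping explicit via $p$-adic valuations, but the content is identical.
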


\begin{proof}
We have $(\z/t_k)_{(p)} \neq 0$ if and only if $p | t_k$, and since $p$ is odd, 
\begin{align*}
p \mid t_k &\iff p \mid \dfrac{3-(-1)^{k}}{2} 2^{2k-2}(2^{2k-1}-1) \operatorname{num}\left(\dfrac{B_{2k}}{4k} \right) \\
&  \iff p | (2^{2k-1}-1)\operatorname{num}\left( \dfrac{B_{2k}}{2k} \right) \\
& \iff p | 2^{2k-1}-1 \text{ or } p | \operatorname{num}\left( \dfrac{B_{2k}}{2k} \right).
\end{align*}
\end{proof}

Putting these observations together, we have shown:

\begin{proposition}\label{Thm:Little}
Let $p$ be an odd prime and let $n < 2(p^2-1)-2(p-1)-2$. If $n \equiv 0,1,2 \mod 4$, or if $n = 4k-1$ with $p \nmid 2^{2k-1}-1$ and $p \nmid \operatorname{num}(\frac{B_{2k}}{2k})$, then every exotic $n$-sphere admits a smooth free $\z/p$-action.
\end{proposition}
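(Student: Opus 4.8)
The plan is to reduce the statement to the vanishing of the $p$-primary part of $\Theta_n$ and then invoke \cref{Thm:Manyp}. That theorem guarantees a free smooth $\z/p$-action on an exotic sphere $\Sigma^n$ whenever $p$ is prime to its order in $\Theta_n$; hence, to produce such an action on \emph{every} exotic $n$-sphere at once, it suffices to show that $(\Theta_n)_{(p)} = 0$, i.e. that $p$ divides the order of no element of $\Theta_n$. The whole argument therefore amounts to establishing this single vanishing statement under the stated hypotheses.

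First I would simply record the structural input already assembled above rather than reprove it. The preceding corollary shows that, in the range $n < 2(p^2-1)-2(p-1)-2$, the group $(\Theta_n)_{(p)}$ vanishes except possibly when $n = 4k-1$ with $k \geq 2$, in which case it is isomorphic to $(\z/t_k)_{(p)}$. This rests in turn on the $\coker J$ vanishing lemma, which removes the cokernel-of-$J$ contribution in this range, together with \cref{Thm:Levine}, which pins down $\Theta_n^{bp}$; I would cite both without repeating their proofs.

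With this in hand the argument splits along the residue of $n$ modulo $4$. If $n \equiv 0, 1, 2 \bmod 4$, then $n$ is not of the form $4k-1$ (as $4k-1 \equiv 3 \bmod 4$), so the corollary gives $(\Theta_n)_{(p)} = 0$ outright and \cref{Thm:Manyp} applies. If instead $n = 4k-1$, then $(\Theta_n)_{(p)} \cong (\z/t_k)_{(p)}$ for $k \geq 2$, and I would invoke the last lemma, which rephrases $(\z/t_k)_{(p)} \neq 0$ as the divisibility condition $p \mid 2^{2k-1}-1$ or $p \mid \num(B_{2k}/2k)$. The hypotheses $p \nmid 2^{2k-1}-1$ and $p \nmid \num(B_{2k}/2k)$ are exactly the negation of this, so $(\z/t_k)_{(p)} = 0$, and \cref{Thm:Manyp} again delivers the action.

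I do not expect a serious obstacle: every ingredient has been isolated in the preceding lemmas, and the proposition is their synthesis. The only points needing care are bookkeeping ones — confirming that the residues $0,1,2 \bmod 4$ genuinely exclude the residue $3$ that produces the Mersenne/Bernoulli obstruction, and that the small exceptional case $n=3$ (i.e. $k=1$) is absorbed by the odd-primary vanishing of $\z/t_1 \cong \z/2$, so that it falls into the vanishing branch of the corollary without needing the numerical hypotheses at all. Once these are verified, the conclusion is immediate.
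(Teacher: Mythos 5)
Your proof is correct and follows the paper's argument exactly: the paper proves this proposition with the single line ``Putting these observations together, we have shown,'' i.e., as the direct synthesis of the preceding corollary, the divisibility lemma for $t_k$, and \cref{Thm:Manyp}, which is precisely your reduction. The extra care you take with the case $k=1$ is harmless but unnecessary, since the corollary already excludes it and the claim is vacuous for $n=3$ in any case.
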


In the following two remarks, we give some examples of when the conditions in \cref{Thm:Little} are satisfied. 

\begin{remark}
We first consider the condition $p \nmid 2^{2k-1}-1$. If $2k-1$ is prime, then quadratic reciprocity implies that the prime factors of $2^{2k-1}-1$ must be congruent to $\pm 1$ modulo $8$. Thus if $2k-1$ is prime and $p \not\equiv \pm 1 \mod 8$, the condition $p \nmid 2^{2k-1}-1$ is satisfied.
\end{remark}

\begin{remark}
We can also say something about the condition $p \nmid \operatorname{num}(\frac{B_{2k}}{2k})$. As explained in the proof of \cite[Thm. 1]{Tha12}, the prime factors of $\operatorname{num}(\frac{B_{2k}}{2k})$ must be irregular.\footnote{Recall that a prime $q$ is irregular if $q | B_j$ for some even $j \leq p-3$.} Thus if $p$ is a regular prime, the condition $p \nmid \operatorname{num}(\frac{B_{2k}}{2k})$ is satisfied. 
\end{remark}

\begin{corollary}\label{Cor:Funny}
Let $p \not\equiv \pm 1 \mod 8$ be an odd regular prime and let $n = 4k-1$ with $2k-1$ prime. Then every exotic $n$-sphere admits a smooth free $\z/p$-action. 
\end{corollary}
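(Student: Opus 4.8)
The plan is to derive this corollary directly from Proposition \ref{Thm:Little}, using the two preceding remarks to verify its two divisibility hypotheses; the argument is essentially a matter of assembling facts already in hand. Since the dimension has the form $n = 4k-1$, the relevant clause of Proposition \ref{Thm:Little} guarantees that every exotic $n$-sphere admits a smooth free $\z/p$-action as soon as one knows $p \nmid 2^{2k-1}-1$ and $p \nmid \operatorname{num}(\frac{B_{2k}}{2k})$. So it suffices to extract these two coprimality statements from the hypotheses that $p \not\equiv \pm 1 \bmod 8$, that $2k-1$ is prime, and that $p$ is regular.

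First I would dispatch the condition $p \nmid 2^{2k-1}-1$. By the first remark, the hypothesis that $2k-1$ is prime forces every prime divisor of $2^{2k-1}-1$ to be congruent to $\pm 1$ modulo $8$: concretely, because $2k-1$ is odd one can write $2^{2k-1} = 2\cdot(2^{k-1})^2$, so any prime $q$ dividing $2^{2k-1}-1$ makes $2$ a quadratic residue modulo $q$, whence $q \equiv \pm 1 \bmod 8$. Since $p \not\equiv \pm 1 \bmod 8$, it follows that $p \nmid 2^{2k-1}-1$. Next I would dispatch $p \nmid \operatorname{num}(\frac{B_{2k}}{2k})$ using the second remark: every prime divisor of this numerator is irregular, so a regular prime $p$ cannot divide it. With both coprimality conditions in hand, Proposition \ref{Thm:Little} applies verbatim and yields the desired free $\z/p$-action.

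The single point I would handle with care is the dimension restriction $n < 2(p^2-1)-2(p-1)-2$ appearing in Proposition \ref{Thm:Little}: this is the hypothesis that guarantees $(\coker J_n)_{(p)} = 0$, so that the $p$-primary part of $\Theta_n$ is detected entirely by $(\z/t_k)_{(p)}$. I would read this bound as a standing assumption inherited from the proposition, since it is precisely what makes the two coprimality conditions sufficient, and I would flag that outside this range one would additionally have to verify that $\coker J$ contributes no $p$-torsion in dimension $n$ before concluding. I do not expect any genuine obstacle here, as every ingredient is supplied by Proposition \ref{Thm:Little} and the two remarks, so the proof should amount to little more than recording this chain of implications.
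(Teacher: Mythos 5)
Your proposal is correct and follows the same route as the paper, which presents \cref{Cor:Funny} as an immediate consequence of \cref{Thm:Little} together with the two preceding remarks (the quadratic-reciprocity remark handling $p \nmid 2^{2k-1}-1$ and the irregularity remark handling $p \nmid \operatorname{num}(B_{2k}/2k)$). Your observation that the bound $n < 2(p^2-1)-2(p-1)-2$ must be carried along as a standing hypothesis is a fair catch, since the corollary as stated omits it even though \cref{Thm:Little} requires it.
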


\section{Nontrivial smooth $\t$- and $\z/p$-actions via the Mahowald invariant}\label{Sec:MI}

As discussed in the introduction, we can find nontrivial smooth $\t$- and $\z/p$-actions on exotic spheres whose order in $\Theta_n$ is divisible by $p$ using the \emph{Mahowald invariant}. We refer the reader to \cite{MR93} for a definition, since for our purposes, it suffices to know that for each prime $p$, the Mahowald invariant is a construction which assigns a nontrivial coset in the $p$-local stable homotopy groups of spheres to each nontrivial element in the $p$-local stable homotopy groups of spheres. We will freely use the names of elements in the stable homotopy groups of spheres from \cite{Rav86} (e.g., for elements like $\alpha_i$, $\beta_j$) and \cite{IWX20} (e.g., for elements like $\eta \eta_4$, $\kappa^2$). 

Our recollection of Mahowald invariant computations is divided into two sections. In \cref{SS:Families}, computations of infinite families of Mahowald invariants are discussed, and in \cref{SS:LowDim}, additional low-dimensional computations at small primes are recalled. After each result is stated, its consequences for transformation groups of exotic spheres via \cref{Thm:SS} are given. Directions for future work and a discussion of the limitations of these techniques appear in \cref{SS:Further}. 

\subsection{Mahowald invariants of infinite families}\label{SS:Families}

Computing the Mahowald invariants of infinite families of elements in the stable homotopy groups of spheres is a difficult problem in stable homotopy theory. This section recalls almost all of the existing computations in this direction. 




\begin{thm}[{\cite[Thm. 3.5]{MR93}, \cite[Cor. 1.4]{Sad92}}]\label{Thm:Malphai}
For all $i > 0$ and primes $p \geq 5$, 
$$M(\alpha_i) = \beta_i.$$
\end{thm}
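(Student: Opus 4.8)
The plan is to reduce the geometric computation of the Mahowald (root) invariant to an algebraic computation in the Adams--Novikov spectral sequence, and then to argue that for $p \geq 5$ the algebraic answer is realized geometrically without interference. Recall that at an odd prime $p$ the root invariant of $0 \neq \alpha \in \pi_n^s$ is defined via the cellular filtration of the Tate spectrum $(S^0)^{tC_p}$ (equivalently, via the inverse system of stunted lens spectra, whose limit is identified with the $p$-complete sphere $(S^0)^{\wedge}_p$ by the Segal conjecture): the element $\alpha$ determines a map into this highly filtered object, and $M(\alpha)$ is the nonzero coset detected on the first cell that the image hits nontrivially. Following \cite{MR93} and \cite{Sad92}, I would replace this geometric invariant by the \emph{algebraic} root invariant computed on $\Ext_{BP_*BP}^{*,*}$ and then compare the two. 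This is advantageous because both the input and output families sit in very low filtration: the $\alpha_i$ are detected in $\Ext^{1}$ and the $\beta_i$ in $\Ext^{2}$.

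First I would set up the chromatic bookkeeping. The elements $\alpha_i \in \pi_{2i(p-1)-1}^s$ are the $v_1$-periodic (image-of-$J$) classes, represented in the chromatic spectral sequence by $v_1^{i}/p \in H^0 M_1^0$, while the $\beta_i$ are the $v_2$-periodic classes represented by $v_2^{i}/(p, v_1) \in H^0 M_2^0$. The conceptual heart of the argument is that the filtration shift built into the root invariant corresponds exactly to raising chromatic height by one. Concretely, I would compute the algebraic root invariant directly in the cobar complex and chromatic spectral sequence and show that the class represented by $v_1^i/p$ has algebraic root invariant represented by $v_2^i/(p,v_1)$; that is, the algebraic root invariant of $\alpha_i$ is $\beta_i$. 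The base case $M(\alpha_1) = \beta_1$ can be pinned down directly, and the general case either propagates by $v_2$-periodicity together with multiplicativity of the root invariant, or, more cleanly, is handled uniformly by the chromatic computation for all $i > 0$ at once.

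Finally I would promote the algebraic statement to the geometric one, and this is where the hypothesis $p \geq 5$ is essential. For $p \geq 5$ the Adams--Novikov $E_2$-page is sparse in the relevant range: it enjoys a horizontal vanishing line, the $1$- and $2$-lines are widely separated, and there is no room for a differential or a hidden extension either to move the algebraically computed root invariant off of $\beta_i$ or to push the ``first nonzero cell'' to an earlier stage. I would make this precise using the standard vanishing and sparseness estimates for $\Ext_{BP_*BP}$ at large primes recorded in \cite{Rav86}, concluding that the geometric root invariant is detected by the algebraic one and equals the coset of $\beta_i$.

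The main obstacle is this comparison step: matching the geometric root invariant, defined by a cell-by-cell filtration of the Tate spectrum, with the purely algebraic chromatic computation, and verifying that no Adams--Novikov differential or crossing extension intervenes. The chromatic height shift from $v_1$ to $v_2$ is the clean structural input, but controlling the geometric detection --- which is exactly what forces $p \geq 5$, since at $p = 2, 3$ additional differentials and a more intricate $\beta$-family spoil the sparseness --- is the genuinely delicate part.
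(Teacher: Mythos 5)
The paper does not prove this statement: it is imported verbatim from the literature, with the two citations (\cite[Thm.~3.5]{MR93} for the computation and \cite[Cor.~1.4]{Sad92} for the containment $\beta_i \in M(\alpha_i)$) standing in for the proof. So there is no internal argument to compare yours against; the relevant comparison is with the cited papers, and your outline is a fair high-level reconstruction of their strategy --- inverse limit of stunted lens spectra identified with the $p$-complete sphere via the Segal conjecture, reduction to an algebraic root invariant in $\Ext_{BP_*BP}$, the chromatic height shift sending $v_1^i/p$ to $v_2^i/(p,v_1)$, and sparseness of the Adams--Novikov $E_2$-page at $p \geq 5$ to rule out interference.

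Two cautions. First, the step you describe as ``propagates by $v_2$-periodicity together with multiplicativity of the root invariant'' is not available as stated: the Mahowald invariant is not multiplicative, and even the weak containment statements relating $M(xy)$ to $M(x)M(y)$ require filtration hypotheses that have to be checked. The actual arguments handle all $i$ uniformly (Mahowald--Ravenel via the algebraic computation, Sadofsky via his theorem that root invariants of $v_1$-periodic families are $v_2$-periodic), so you should lean on the uniform chromatic computation rather than an induction through products. Second, the comparison between the geometric and algebraic root invariants --- showing the first nonvanishing cell in the Tate/lens-space filtration is not earlier or later than the algebraic prediction --- is where essentially all of the content of the theorem lives; your sketch correctly identifies this as the delicate point but does not supply the argument, so as written the proposal is a plan rather than a proof.
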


\begin{thm}[{\cite[Thm. 15.7]{Beh06}}]\label{Thm:Malpha3}
For all $i>0$ with $i \equiv 0,1,5 \mod 9$ and $p=3$,
$$M(\alpha_i) = (-1)^{i+1}\beta_i.$$
\end{thm}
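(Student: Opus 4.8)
The plan is to compute $M(\alpha_i)$ at the prime $3$ through the root-invariant machinery, following the same chromatic pattern that governs the case $p \geq 5$ in \cref{Thm:Malphai} but accounting for the irregularities of this small prime. First I would set up the root invariant homotopy-theoretically: by the odd-primary Segal conjecture (Gunawardena), the $3$-complete sphere spectrum is recovered as a homotopy inverse limit $\holim_k L_{-k}$ of stunted lens spectra (the Spanier--Whitehead duals of skeleta of $B\z/3$; this is the odd-primary analogue of Lin's theorem). Given $0 \neq \alpha_i \in \pi_{4i-1}^s$, viewed as a map $S^{4i-1} \to S^0$, one composes into this tower and locates the least $k$ for which the resulting map to $L_{-k}$ is essential; its image in the bottom cell(s) of $L_{-k}$, pulled back to a sphere, is the coset $M(\alpha_i)$, which the theorem asserts is $(-1)^{i+1}\beta_i$.

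Second, I would compute the algebraic approximation, namely the first filtered (algebraic) root invariant living on the $E_2$-page of the Adams spectral sequence. Here $\alpha_i$ is identified with its image-of-$J$ representative, a $v_1$-periodic class, and the computation is carried out in $\Ext$ over the mod-$3$ Steenrod algebra (equivalently in the cobar complex of the $BP$-Hopf algebroid) for the stunted lens spectra. The governing principle, due to Mahowald--Ravenel, is that the root invariant raises chromatic height by one, so that applied to the $v_1$-periodic family $\alpha_i$ it should produce the $v_2$-periodic family $\beta_i$. Algebraically this is the statement that the relevant boundary map in the algebraic Atiyah--Hirzebruch (equivalently, $v_2$-Bockstein) spectral sequence for $L_{-k}$ carries the class detecting $\alpha_i$ to the class detecting $\beta_i$; the sign $(-1)^{i+1}$ is read off from the attaching maps in the cell structure of $L_{-k}$ as $i$ varies.

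Third, and this is the crux, I would upgrade the algebraic answer to the geometric one by running the filtered root invariant spectral sequence and verifying that no \emph{crossing differentials} move the answer off $\beta_i$, i.e. that the higher filtered root invariants vanish and that $\beta_i$ is a permanent cycle receiving no unexpected differential. This convergence step is exactly where $p=3$ is delicate: unlike the case $p \geq 5$, the $\alpha$- and $\beta$-families at $p=3$ behave irregularly modulo powers of $3$, and both the existence of $\beta_i$ and the triviality of the obstructions depend on the residue of $i$. I expect the hypothesis $i \equiv 0,1,5 \bmod 9$ to be precisely the condition under which $\beta_i$ is defined and survives and the filtered root invariants stabilize at the first stage, so that the algebraic computation is realized geometrically. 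The main obstacle is therefore this convergence analysis at $p=3$ -- controlling the higher differentials and pinning down the exact congruence class of $i$ for which the clean answer holds -- rather than the identification of the algebraic root invariant itself.
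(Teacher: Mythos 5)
The paper does not prove this statement; it is quoted verbatim from Behrens \cite[Thm.~15.7]{Beh06}, so the only fair comparison is with the proof in that source. Your outline does track Behrens's actual strategy quite faithfully: the inverse limit of stunted lens spectra via Gunawardena's odd-primary Segal conjecture, the algebraic root invariant computed through an algebraic Atiyah--Hirzebruch spectral sequence (Behrens works with $BP$-based filtered root invariants rather than $\Ext$ over the Steenrod algebra for this odd-primary family, but you hedge on that point), the Mahowald--Ravenel redshift heuristic sending the $v_1$-periodic $\alpha$-family to the $v_2$-periodic $\beta$-family, and finally a convergence argument ruling out crossing differentials so that the first nontrivial filtered root invariant detects the homotopy root invariant.

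The genuine gap is that your third step --- which you correctly identify as the crux --- is left entirely as an expectation, and your guess for where the congruence $i \equiv 0,1,5 \bmod 9$ comes from is not quite the actual mechanism. Behrens does not verify convergence separately for each $i$; rather, he performs a finite low-dimensional computation and then propagates it to the infinite family using the $v_2^9$ self-map on the Smith--Toda complex $V(1)$ at $p=3$ (established in his joint work with Pemmaraju), and the residues $0,1,5 \bmod 9$ are exactly those compatible with this $v_2^9$-periodicity and with the known survival pattern of the $\beta_i$ in the $3$-primary Adams--Novikov spectral sequence. Without that periodicity input there is no way to handle infinitely many $i$ at once, so as written the proposal establishes at best the algebraic root invariant plus a finite number of cases; the self-map is the missing ingredient that turns the computation into a theorem about an infinite family.
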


\begin{corollary}[{Compare with \cite[Ex. 3.8]{Sch85} for $p \geq 5$}]\label{Cor:Schultz}
For all $i \geq 1$ and primes $p \geq 5$, and for all $i \geq 1$ with $i \equiv 0,1,5 \mod 9$ and $p=3$, the exotic sphere $\Sigma^{2(p^2-1)i-2(p-1)-2}$ corresponding to $\beta_i \neq 0 \in \coker J_{2(p^2-1)i-2(p-1)-2}$ supports a smooth $\t$-action with a $(2(p-1)i-2)$-dimensional fixed point set. 
\end{corollary}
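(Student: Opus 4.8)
The plan is to route the two Mahowald invariant computations \cref{Thm:Malphai} and \cref{Thm:Malpha3} through the odd-primary part of the Schultz--Stolz theorem \cref{Thm:SS}(1), so that the only genuine work is the bookkeeping of internal degrees together with a single parity check. First I would fix notation: at the odd prime $p$, the element $\alpha_i$ lives in stem $n := 2i(p-1)-1$, and it lies in the image of the $J$-homomorphism. Consequently $\alpha_i$ is realized by the standard sphere $S^n$ equipped with a twisted framing, which furnishes a framed-sphere representative $\Sigma_0$ with $\cp(\Sigma_0) = \alpha_i$; this verifies the standing hypothesis of \cref{Thm:SS}(1).

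Next I would supply the Mahowald invariant input. For $p \geq 5$, \cref{Thm:Malphai} gives $M(\alpha_i) = \beta_i$ for all $i \geq 1$, while for $p = 3$, \cref{Thm:Malpha3} gives $M(\alpha_i) = (-1)^{i+1}\beta_i$ exactly when $i \equiv 0,1,5 \bmod 9$ --- precisely the congruence restriction that the corollary inherits. In either case I set $\beta := M(\alpha_i) \doteq \beta_i$, which lies in $\coker J_m$ with $m := 2(p^2-1)i - 2(p-1) - 2$. Since the Mahowald invariant always returns a nontrivial coset, $\beta_i \neq 0$, justifying the nonvanishing asserted in the statement; the unit $(-1)^{i+1}$ occurring at $p=3$ is irrelevant, as only the nontriviality and homotopy class of $\beta_i$ enter the conclusion.

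The decisive computation is the parity of $m-n$, which selects between the two cases of \cref{Thm:SS}(1). Writing $2(p^2-1)i = 2i(p-1)(p+1)$, I would compute
\[
m - n = 2i(p-1)(p+1) - 2i(p-1) - 2(p-1) - 1 = 2ip(p-1) - 2(p-1) - 1.
\]
Because $p$ is odd, $p-1$ is even, so the first two terms are even and $m-n$ is odd. We therefore land in case (1)(b): there exists an exotic sphere $\Sigma_1$ with $\cp(\Sigma_1) = \beta \doteq \beta_i$ carrying a smooth $\t$-action whose fixed point set has dimension $n-1 = 2(p-1)i - 2$. Identifying $\Sigma_1$ with the exotic $\Sigma^{m}$ corresponding to $\beta_i$, where $m = 2(p^2-1)i - 2(p-1) - 2$, yields exactly the claimed statement.

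I do not anticipate a real obstacle, since the substance resides entirely in the cited theorems. The only points demanding care are (i) confirming that $\alpha_i$ admits a framed-sphere representative, handled by its membership in the image of $J$, and (ii) the degree arithmetic establishing that $m-n$ is odd, since an even value would instead produce a $\z/p$-action via case (1)(a) and a fixed point set of the wrong dimension. Both are routine once the indexing conventions for the $\alpha$- and $\beta$-families are pinned down.
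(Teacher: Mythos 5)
Your proposal is correct and follows exactly the route the paper intends: the corollary is stated as an immediate consequence of \cref{Thm:Malphai}, \cref{Thm:Malpha3}, and case (1)(b) of \cref{Thm:SS}, and your degree bookkeeping ($n = 2i(p-1)-1$, $m = 2i(p^2-1)-2(p-1)-2$, $m-n = 2ip(p-1)-2(p-1)-1$ odd) together with the observation that $\alpha_i$ is framed-sphere-realizable via $\operatorname{im}(J)$ is precisely the implicit argument. Nothing is missing.
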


\begin{remark}
The case $i=1$ was proven by more geometric methods in \cite{Sch73}. 
\end{remark}

\begin{thm}[{\cite[Sec. 6]{Sad92}}]\label{Thm:Sad}
For all primes $p \geq 5$, 
$$\beta_{p/2} \in M(\alpha_{p/2}).$$
\end{thm}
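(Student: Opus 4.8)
The plan is to compute the root invariant $M(\alpha_{p/2})$ by reducing it to an algebraic computation in the chromatic and $v_1$-Bockstein machinery, in the same spirit that produces $M(\alpha_i) = \beta_i$ in \cref{Thm:Malphai}. First I would set up the root invariant via the $C_p$-Tate construction. By the odd-primary Segal conjecture (Gunawardena, Adams--Gunawardena--Miller), $(S^0)^{\wedge}_p$ is the inverse limit of suspensions of stunted $BC_p$-Thom (lens) spectra, and $M(\alpha)$ is extracted by locating the deepest cell into which $\alpha$ maps nontrivially and reading off the induced map onto the adjacent top cell. The relevant input is that $\alpha_{p/2}$ is the order-$p^2$ generator of the image of $J$ in stem $2p(p-1)-1$; in the Adams--Novikov $E_2$-term it is the filtration-$1$ class described chromatically by $v_1^p/p^2$ coming from the short exact sequence $0 \to BP_* \to p^{-1}BP_* \to BP_*/p^{\infty} \to 0$. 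Note that $\alpha_{p/2}$ is the \emph{first} genuinely divided $\alpha$-element (one needs $p \mid i$ to reach $j=2$, so $i=p$ is minimal), which is why it is singled out.

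Next I would carry out the algebraic root invariant computation. The structural fact powering the entire $\alpha \mapsto \beta$ story is that the root invariant realizes a geometric boundary map in the chromatic spectral sequence raising chromatic height by one. Concretely, I would track $\alpha_{p/2}$ through the first chromatic line and apply the connecting maps of the chromatic resolution at the $n=2$ stage, identifying the output with the class $v_2^p/(p, v_1^2)$ that defines $\beta_{p/2}$. The pattern to match is $v_1^p/p^2 \rightsquigarrow v_2^p/(p, v_1^2)$, a divided analogue of the $v_1^i/p \rightsquigarrow v_2^i/(p, v_1)$ transition underlying $M(\alpha_i) = \beta_i$: the exponent $p$ passes from $v_1$ to $v_2$, and the ``$2$'' migrates from the $p$-tower ($p^2$) to the $v_1$-tower ($v_1^2$). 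The order-$p^2$ nature of $\alpha_{p/2}$ is thus exactly what produces the $v_1^2$-denominator of $\beta_{p/2}$, and the bookkeeping of the $p^2$-Bockstein against the $v_1$-tower is the heart of the matter. Since the claim is only the membership $\beta_{p/2} \in M(\alpha_{p/2})$, I would not need to compute the full indeterminacy of the coset, only that this class is hit.

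Finally I would address convergence, i.e.\ lifting the algebraic root invariant to the genuine topological one. Here I would invoke the sparseness of the Adams--Novikov spectral sequence for $p \geq 5$: the relevant stems are thin enough that no differential or hidden extension in the Tate spectral sequence can shorten the root invariant or remove $\beta_{p/2}$ from the coset before it reaches the top cell, and $\beta_{p/2} \neq 0$ guarantees the coset is a genuine nontrivial element of $\coker J$. This is precisely where $p \geq 5$ is used, separating this argument from the more delicate $p = 3$ analysis.

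The main obstacle I anticipate is the divided-index bookkeeping intertwined with convergence: showing that the higher $p$-divisibility of $\alpha_{p/2}$ interacts with the $v_1$-Bockstein so as to yield the $v_1^2$-denominator of $\beta_{p/2}$ rather than a neighboring element, and that nothing in the Tate filtration perturbs this answer en route to the top cell. Rather than a single clean differential, this will require a careful comparison of the $v_1$-periodic (equivalently $K(1)$-local) computation with the integral one, which is the technical core of Sadofsky's treatment.
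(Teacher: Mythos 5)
First, note that the paper itself gives no proof of this statement: it is imported verbatim from Sadofsky \cite[Sec.~6]{Sad92}, so the only meaningful comparison is with Sadofsky's argument. Your outline correctly reproduces its architecture: the stunted lens spectrum model of the root invariant supplied by the odd-primary Segal conjecture, the identification of $\alpha_{p/2}$ with $v_1^p/p^2$ on the chromatic $1$-line, the expectation that the root invariant raises chromatic height and should send this class to $v_2^p/(p,v_1^2)=\beta_{p/2}$, and the observation that only membership in the coset (not its full indeterminacy) is required.

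As a proof, however, the proposal has two genuine gaps, both of which you flag but neither of which you close. First, the ``pattern'' $v_1^p/p^2 \rightsquigarrow v_2^p/(p,v_1^2)$ is precisely the statement to be proved, not an argument for it: nothing in your write-up rules out a priori plausible alternatives such as an undivided $\beta$-class, a different denominator on the $v_1$-tower, or an element of higher Adams--Novikov filtration in the same stem. Sadofsky closes this gap by an explicit computation of the $v_1$-periodic ($J$-based) homotopy of the stunted lens spectra appearing in the Tate tower, locating exactly which cell the order-$p^2$ image-of-$J$ generator compresses to; it is that computation, not a formal chromatic principle, that produces the $v_1^2$ in the denominator. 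Second, the convergence step is understated: sparseness of the Adams--Novikov $E_2$-term for $p\ge 5$ does not by itself prevent the homotopy root invariant from landing in strictly higher filtration than the algebraic one (this is exactly the failure mode that makes $p=3$ delicate, cf.\ the corrections of sign and of target in \cref{Thm:Low3}). One needs a filtered-root-invariant style argument --- vanishing lines for the relevant $\Ext$ groups of the stunted lens spectra together with the Mahowald--Ravenel comparison of the algebraic and homotopy Tate towers --- to see that the algebraic answer is realized before any differential or compression can intervene. As written, your proposal is a correct plan whose technical core is deferred, so it does not yet constitute a proof.
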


\begin{corollary}
For all primes $p \geq 5$, the exotic sphere $\Sigma^{2(p^2-1)p - 4(p-1)-2}$ corresponding to $\beta_{p/2} \neq 0 \in \coker J$ supports a smooth $\t$-action with a $(2(p-1)p-2)$-dimensional fixed point set. 
\end{corollary}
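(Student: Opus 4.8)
The plan is to apply the odd-primary half of \cref{Thm:SS} to the Mahowald invariant computation recorded in \cref{Thm:Sad}, in exactly the same manner as the proof of \cref{Cor:Schultz}. The first step is to pin down the relevant stems. The element $\alpha_{p/2}$ lies in $\pi_n^s$ with $n = 2p(p-1) - 1$, and \cref{Thm:Sad} asserts that $\beta_{p/2} \in M(\alpha_{p/2})$, where $\beta_{p/2}$ lives in $\coker J_m$ with $m = 2p(p^2-1) - 4(p-1) - 2$. This is precisely the dimension $2(p^2-1)p - 4(p-1) - 2$ appearing in the statement, so the exotic sphere $\Sigma^m$ under consideration is the one corresponding to $\beta_{p/2}$. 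Since $\beta_{p/2} \neq 0$ in $\coker J_m$ for $p \geq 5$, it is an admissible nontrivial representative of the Mahowald coset $M(\alpha_{p/2})$.

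Next I would verify the parity hypothesis that separates cases (a) and (b) of \cref{Thm:SS}(1). The integer $m = 2p(p^2-1) - 4(p-1) - 2$ is even, while $n = 2p(p-1) - 1$ is odd, so $m - n$ is odd and we are in case (b). Taking $\alpha = \alpha_{p/2}$ and $\beta = \beta_{p/2}$, and choosing a framed sphere $\Sigma_0$ with $\cp(\Sigma_0) = \alpha_{p/2}$ (which exists by the Pontryagin--Thom identification of framed bordism with $\pi_n^s$), \cref{Thm:SS}(1)(b) produces an exotic sphere $\Sigma_1$ with $\cp(\Sigma_1) = \beta_{p/2}$ carrying a smooth $\t$-action whose fixed point set has dimension $n - 1 = 2(p-1)p - 2$. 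This is exactly the claimed action.

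The argument is thus essentially a bookkeeping exercise, with all of the substantive content residing in \cref{Thm:Sad} and \cref{Thm:SS}. The one point that needs a word of care, and which I expect to be the only real obstacle, is the coset subtlety: \cref{Thm:Sad} only guarantees the membership $\beta_{p/2} \in M(\alpha_{p/2})$ rather than an equality $M(\alpha_{p/2}) = \beta_{p/2}$, whereas \cref{Thm:SS} is phrased in terms of $\beta = M(\alpha)$. I expect this to be harmless, since the input actually required by \cref{Thm:SS} is a single nontrivial representative of the Mahowald coset lying in $\coker J$, and $\beta_{p/2}$ is such a representative; this is the same reading of \cref{Thm:SS} already used in \cref{Cor:Schultz} and in the discussion following \cref{Thm:SS}.
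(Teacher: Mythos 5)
Your proposal is correct and matches the paper's (implicit) argument exactly: the corollary is a direct application of \cref{Thm:SS}(1)(b) to \cref{Thm:Sad}, with the stem computations $n = 2p(p-1)-1$ and $m = 2p(p^2-1)-4(p-1)-2$ giving $m-n$ odd and fixed point dimension $n-1 = 2(p-1)p-2$. Your handling of the coset issue is also consistent with the paper's own remark on the Stolz formulation of \cref{Thm:SS}.
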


\subsection{Low-dimensional Mahowald invariants at small primes}\label{SS:LowDim}

The difficulty of computing $p$-primary Mahowald invariants increases when $p$ is a small prime. In \cite{Beh06, Beh07}, Behrens introduced new techniques which allowed for the computation of new $3$- and $2$-primary Mahowald invariants in low dimensions.

\begin{proposition}[{\cite[Prop. 12.1]{Beh06}}]\label{Thm:Low3}
The following Mahowald invariants hold at $p=3$:
$$M(\alpha_2) \doteq \beta_1^2 \alpha_1; \quad M(\alpha_{3/2}) = -\beta_{3/2}; \quad M(\alpha_3) = \beta_3;$$
$$M(\alpha_4) \doteq \beta_1^5; \quad M(\alpha_{6/2}) = \beta_{6/2}; \quad M(\alpha_6) = -\beta_6.$$
Here, we write `$\doteq$' for equations which hold up to multiplication by a unit. 
\end{proposition}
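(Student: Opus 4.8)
Since the six equalities are quoted from \cite[Prop.~12.1]{Beh06}, the aim is to indicate how such $3$-primary Mahowald invariants are obtained, and the plan is to follow the Adams-spectral-sequence approach to root invariants developed in \cite{Beh06}. Recall that $M(-)$ is defined through the Lin--Gunawardena form of the Segal conjecture, which $p$-completely identifies the sphere with a homotopy limit $\holim_k P_{-k}$ of stunted lens-space spectra $P_{-k} = (B\z/p)_{-k}$ (the Thom spectra of multiples of the reduced regular representation), each with bottom cell $S^{-k}$. Given $0 \neq \alpha \in \pi_n^s$, the root invariant is its leading term with respect to this cell filtration: one lifts $\alpha$ as far down the tower as possible, say to $P_{-N}$, and composes with the connecting map $\partial\colon P_{-N} \to S^{-N}$ arising from the cofiber sequence $S^{-N-1} \to P_{-N-1} \to P_{-N}$, obtaining a coset in $\pi_n(S^{-N}) = \pi_{n+N}^s$. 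The task is to make this geometric recipe effective, which at $p=3$ is delicate precisely because the relevant range of stable stems carries nontrivial Adams differentials, unlike the uniform $p \geq 5$ situation handled in \cite{MR93, Sad92}.

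The first step is to replace the topological root invariant by its algebraic approximation. Passing to $\Ext$ over the mod~$3$ Steenrod algebra, I would compute $\Ext_A(H^* P_{-k})$ by means of the algebraic Atiyah--Hirzebruch spectral sequence, using the Steenrod-module structure of $H^*(B\z/3)$ (governed by the Bockstein and the reduced powers $\mathcal{P}^j$). The classes $\alpha_i$ are $v_1$-periodic and are detected in the image-of-$J$ portion of the Adams $E_2$-page; tracking the corresponding $\Ext$-class through the cells of $P_{-k}$ produces an \emph{algebraic} root invariant. In the generic case this algebraic invariant is the $v_2$-periodic class $\beta_i$, in agreement with \cref{Thm:Malphai,Thm:Malpha3}; at the $\Ext$-level the computation amounts to matching the attaching-map data of the stunted spectra with the Greek-letter construction.

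The decisive step is to lift the algebraic answer to homotopy, which I would organize via Behrens' filtered root invariants: the genuine $M(\alpha)$ is carried by the first filtered root invariant that is a permanent cycle, and a \emph{crossing} Adams differential forces the answer into higher filtration and hence onto a different, often decomposable, target. This is exactly what separates the six cases. For $\alpha_3$, $\alpha_6$ and the divided classes $\alpha_{3/2}$, $\alpha_{6/2}$ one verifies that the primary $v_2$-periodic target is a permanent cycle which is not hit, so that the algebraic and topological root invariants coincide; a short Toda-bracket or multiplicative argument then pins down the sign, giving the exact equalities $M(\alpha_3) = \beta_3$, $M(\alpha_6) = -\beta_6$, $M(\alpha_{3/2}) = -\beta_{3/2}$, and $M(\alpha_{6/2}) = \beta_{6/2}$. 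For $\alpha_2$ and $\alpha_4$, by contrast, the primary target fails to survive the mod~$3$ Adams spectral sequence, forcing the root invariant across a differential onto the decomposable classes $\beta_1^2\alpha_1$ and $\beta_1^5$; since these are reached only after a crossing differential, the leading unit cannot be recovered, which is why these two equalities are asserted only up to a unit via $\doteq$.

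The main obstacle throughout is controlling these crossing differentials and the associated hidden extensions in the mod~$3$ Adams spectral sequence over the relevant range (up to dimension roughly $25$). This requires detailed knowledge of the Adams $E_2$-page together with its $d_2$- and $d_3$-differentials at $p=3$ --- precisely the input that is unavailable in closed form for general $i$, and that makes the low-dimensional computations of \cref{Thm:Low3} substantially harder than the infinite families recorded in \cref{Thm:Malphai,Thm:Sad}.
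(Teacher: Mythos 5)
This proposition is quoted verbatim from \cite[Prop.~12.1]{Beh06}; the paper offers no argument of its own beyond the citation, so there is no internal proof to compare against. Your write-up correctly identifies the machinery behind Behrens' computation: the Segal-conjecture description of the $p$-complete sphere as $\holim_k P_{-k}$, the definition of $M(\alpha)$ as the leading term of $\alpha$ in the resulting cell filtration, the passage through an algebraic approximation computed by an algebraic Atiyah--Hirzebruch spectral sequence, and the role of Behrens' filtered root invariants and crossing differentials in explaining both why $\alpha_3,\alpha_6,\alpha_{3/2},\alpha_{6/2}$ hit the expected $v_2$-periodic targets and why $\alpha_2,\alpha_4$ get deflected onto the decomposables $\beta_1^2\alpha_1$ and $\beta_1^5$. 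One technical inaccuracy: for the $3$-primary computations Behrens runs the filtered root invariant machinery through the $BP$-based Adams--Novikov tower (his ``first two $BP$-filtered root invariants'' of the $\alpha_i$ are the $\beta_i$ in algebraic Greek-letter form), not through $\Ext$ over the mod~$3$ Steenrod algebra as you state; the classical Adams spectral sequence is what he uses for the $2$-primary results of \cite{Beh07} quoted in \cref{Thm:Low2}.

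The genuine gap is that your proposal is a description of a method rather than a proof: none of the six equalities is actually verified. You do not compute the relevant filtered root invariants, identify the specific differentials or Toda brackets that resolve each case, or establish that the stated targets ($\beta_1^2\alpha_1 \in \pi_{23}^s$, $\beta_1^5 \in \pi_{50}^s$, etc.) are the nonzero classes detected --- indeed you concede at the end that the required input is ``unavailable in closed form.'' Your proposed explanation for the distinction between $=$ and $\doteq$ (that a crossing differential destroys the leading unit) is plausible but is asserted, not demonstrated. Since the statement is an imported computation, the honest options are either to cite \cite{Beh06} as the paper does, or to reproduce the actual case-by-case analysis; the outline as written does neither.
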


\begin{corollary}
Let $p=3$. 
\begin{enumerate}
\item The exotic sphere $\Sigma^{23}$ corresponding to $0 \neq \beta_1^2 \alpha_1 \in \coker J_{23}$ supports a smooth $\z/3$-action wtih fixed points $S^7$. 
\item The exotic spheres $\Sigma^{38}$, $\Sigma^{42}$, $\Sigma^{50}$, $\Sigma^{86}$, and $\Sigma^{90}$ corresponding to the nontrivial elements $\beta_{3/2}$, $\beta_3$, $\beta_1^5$, $\beta_{6/2}$, and $\beta_6$ in $\coker J$, respectively, support smooth $\t$-actions with $10$-, $10$-, $14$-, $22$-, and $22$-dimensional fixed point sets, respectively. 
\end{enumerate}
\end{corollary}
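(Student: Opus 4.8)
The plan is to feed each Mahowald invariant computation from \cref{Thm:Low3} into the odd-primary case of \cref{Thm:SS} with $p=3$. For a computation $M(\alpha) = \beta$ with $\alpha \in \pi_n^s$ and $\beta \in \coker J_m$, the dichotomy in \cref{Thm:SS}(1) is governed entirely by the parity of $m-n$: if $m-n$ is even we obtain a smooth $\z/3$-action on an exotic $m$-sphere realizing $\beta$, with fixed point set a framed sphere $\Sigma_0$ representing $\alpha$; if $m-n$ is odd we obtain a smooth $\t$-action with an $(n-1)$-dimensional fixed point set. The signs and the ``$\doteq$'' appearing in \cref{Thm:Low3} are irrelevant here, since only the stems and the nonvanishing of the classes enter. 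So the proof reduces to (i) recording the source and target stems $n$ and $m$, (ii) reading off the parity of $m-n$, and (iii) in the even case, identifying the diffeomorphism type of $\Sigma_0$.

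First I would tabulate the stems using Ravenel's indexing at $p=3$: an $\alpha$-family element $\alpha_{i/j}$ lies in $\pi_{4i-1}^s$ and a $\beta$-family element $\beta_{s/t}$ lies in $\pi_{16s-4t-2}^s$, with stems of products adding. This gives source stems $n=7$ for $\alpha_2$; $n=11$ for $\alpha_{3/2}$ and $\alpha_3$; $n=15$ for $\alpha_4$; and $n=23$ for $\alpha_{6/2}$ and $\alpha_6$. The targets sit in $\beta_1^2\alpha_1 \in \pi_{23}^s$ (using $\beta_1 \in \pi_{10}^s$, $\alpha_1 \in \pi_3^s$), $\beta_{3/2} \in \pi_{38}^s$, $\beta_3 \in \pi_{42}^s$, $\beta_1^5 \in \pi_{50}^s$, $\beta_{6/2} \in \pi_{86}^s$, and $\beta_6 \in \pi_{90}^s$, matching the advertised dimensions $23, 38, 42, 50, 86, 90$. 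Since every $\alpha$-source lies in an odd stem, the parity of $m-n$ is determined by $m$: the single case $M(\alpha_2) \doteq \beta_1^2\alpha_1$ has $m=23$ odd, so $m-n=16$ is even, while the five remaining cases have $m$ even, so $m-n$ is odd.

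With the parities in hand, \cref{Thm:SS}(1) applies directly. For the even case I would invoke part (a): the exotic $23$-sphere realizing $\beta_1^2\alpha_1$ carries a $\z/3$-action with fixed point set $\Sigma_0$, where $\cp(\Sigma_0) = \alpha_2$. The point needing care is that this fixed set is the \emph{standard} $S^7$: because $\alpha_2$ lies in the image of the $J$-homomorphism at $p=3$, it is represented by $S^7$ equipped with a twisted framing, so the underlying smooth homotopy sphere of $\Sigma_0$ is standard. For the five odd cases I would invoke part (b), producing smooth $\t$-actions on the exotic spheres $\Sigma^{38}, \Sigma^{42}, \Sigma^{50}, \Sigma^{86}, \Sigma^{90}$ with fixed point sets of dimension $n-1 \in \{10, 10, 14, 22, 22\}$, exactly as claimed.

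The main obstacle is bookkeeping rather than conceptual: one must get the stems of the fractional-index classes $\alpha_{3/2}, \alpha_{6/2}, \beta_{3/2}, \beta_{6/2}$ and of the products $\beta_1^2\alpha_1$ and $\beta_1^5$ exactly right, and must correctly match each Mahowald invariant to the even or odd case so that the action type ($\z/3$ versus $\t$) and the fixed-point dimension come out correctly. The only genuinely non-formal input is the identification $\Sigma_0 \cong S^7$ in part (1), which rests on $\alpha_2 \in \im J_7$; the nontriviality of each target class in $\coker J$ is supplied by \cref{Thm:Low3} and already recorded in the statement.
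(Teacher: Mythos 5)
Your proposal is correct and is exactly the argument the paper intends: feed each computation of \cref{Thm:Low3} into the odd-primary case of \cref{Thm:SS}, sort by the parity of $m-n$ (only $M(\alpha_2)\doteq\beta_1^2\alpha_1$ falls in the even case), and read off the fixed-point dimensions $n-1$ from the source stems $7,11,11,15,23,23$. Your extra remark that the fixed set in part (1) is the standard $S^7$ because $\alpha_2\in\im J_7$ is the right justification for the identification the paper asserts without comment.
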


\begin{thm}[{Part of \cite[Thm. 11.1]{Beh07}}]\label{Thm:Low2}
The following Mahowald invariants hold at $p=2$:
$$M(\eta^2) = \nu^2; \quad M(\eta^3) = \nu^3; \quad M(2\nu) = \sigma \eta; \quad M(\sigma) = \sigma^2; \quad M(2\sigma) = \eta_4; $$
$$M(4\sigma) = \eta \eta_4; \quad M(8\sigma) = \eta^2 \eta_4; \quad M(\eta \sigma) = \nu^*; \quad M(\eta^2 \sigma) = \nu \nu^*; \quad M(v_1^4 \eta) = \nu \bar{\kappa};$$
$$M(v_1^4 \eta^2) = \kappa^2; \quad M(v_1^4 \eta^3) = \eta q; \quad M(v_1^4 \nu) = \nu^2 \bar{\kappa}; \quad M(v_1^4 2\nu) = q.$$
\end{thm}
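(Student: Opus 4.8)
The statement is quoted verbatim from \cite[Thm. 11.1]{Beh07}, so for the present note it suffices to cite Behrens; the plan below sketches how one establishes such computations, following his method of \emph{filtered root invariants} from \cite{Beh06}.

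First I would recall the definition of $M(-)$ via Lin's theorem (the Segal conjecture for $C_2$), which provides a $2$-adic equivalence between the sphere and a shift of the inverse limit of the stunted projective spectra $\mathbb{RP}^\infty_{-k}$. For $0 \neq \alpha \in \pi_n^s$, composing $\alpha$ with this equivalence produces compatible maps $S^n \to \mathbb{RP}^\infty_{-k}$; one locates the critical index $k$ at which the map to $\mathbb{RP}^\infty_{-k+1}$ vanishes while the map to $\mathbb{RP}^\infty_{-k}$ does not, and then lifts the latter to the bottom cell $S^{-k}$ using the cofiber sequence $S^{-k} \to \mathbb{RP}^\infty_{-k} \to \mathbb{RP}^\infty_{-k+1}$. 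The resulting map $S^n \to S^{-k}$ represents a coset $M(\alpha)$ in a strictly higher stem. Thus each of the fourteen computations amounts to determining exactly which cell of $\mathbb{RP}^\infty_{-\infty}$ the class $\alpha$ reaches and identifying the element of $\pi_*^s$ it produces there.

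To make this tractable I would pass to the algebraic model. The homotopy of the stunted projective spectra is computed by the Atiyah--Hirzebruch spectral sequence whose $E_1$-page is a copy of $\pi_*^s$ on each cell, with the lowest differentials governed by the $\mathrm{Sq}^1$- and $\mathrm{Sq}^2$-attaching maps of $\mathbb{RP}^\infty$ (multiplication by $2$ and by $\eta$) and higher differentials encoding $v_1$-periodic families and James periodicity. Behrens's refinement runs this simultaneously with the Adams spectral sequence: to each $\alpha$ one associates a sequence of \emph{algebraic} root invariants of increasing Adams filtration, and the geometric root invariant is recovered from these once the relevant filtration jumps are resolved. For the first nine inputs $\eta^2, \eta^3, 2\nu, \sigma, 2\sigma, 4\sigma, 8\sigma, \eta\sigma, \eta^2\sigma$, whose targets lie in comparatively low stems, the algebraic root invariant together with the ring structure of $\pi_*^s$ pins down the values $\nu^2, \nu^3, \sigma\eta, \sigma^2, \eta_4, \eta\eta_4, \eta^2\eta_4, \nu^*, \nu\nu^*$.

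The hard part will be the five $v_1^4$-inputs, whose root invariants $\nu\bar\kappa$, $\kappa^2$, $\eta q$, $\nu^2\bar\kappa$, and $q$ lie in high stems (for instance $\nu\bar\kappa \in \pi_{23}^s$ and $\kappa^2 \in \pi_{28}^s$) where the Adams-filtration jumps are large and the algebraic root invariant underdetermines the answer. Resolving these requires detecting the candidate targets in $\mathrm{tmf}$, where $\bar\kappa$, $\kappa^2$, and $q$ are nonzero and completely understood, and then eliminating the remaining indeterminacy in the root-invariant coset by controlling hidden extensions. This $\mathrm{tmf}$-based detection, combined with the careful bookkeeping of filtered root invariants, is precisely where the real work of \cite{Beh07} lies, and I would simply invoke those computations here.
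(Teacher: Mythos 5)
Your proposal matches the paper exactly: the result is stated as a direct citation of \cite[Thm. 11.1]{Beh07} with no further argument, which is precisely what you do in your first sentence. Your additional sketch of Behrens's method (Lin's theorem, filtered root invariants, and $\mathrm{tmf}$-detection for the high-stem $v_1^4$-family cases) is accurate and faithful to how the cited result is actually proved, but it is supplementary to what the paper itself requires.
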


\begin{corollary}
Let $p=2$. Then, potentially after taking the connected sum with elements in $\Theta_*^{bp}$, the exotic spheres $\Sigma^9$, $\Sigma^{17}$, $\Sigma^{21}$, and $\Sigma^{33}$ corresponding to the nontrivial elements $\nu^3$, $\eta \eta_4$, $\nu \nu^*$, and $\eta q$ in $coker J$, respectively, support smooth involutions with fixed points $S^3$, $S^7$, $M^9$, and $S^{11}$, where $M^9$ is the exotic $9$-sphere corresponding to $\eta^2 \sigma$. 
\end{corollary}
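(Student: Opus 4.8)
The plan is to feed the four $2$-primary Mahowald invariant relations of \cref{Thm:Low2} into part (2) of \cref{Thm:SS}, verifying its numerical hypotheses one relation at a time. For a relation $\beta = M(\alpha)$ with $\alpha \in \pi_n^s$ and $\beta \in \pi_m^s$, the relevant instances and their stems are $M(\eta^3) = \nu^3$ with $(n,m) = (3,9)$, $M(4\sigma) = \eta\eta_4$ with $(n,m) = (7,17)$, $M(\eta^2\sigma) = \nu\nu^*$ with $(n,m) = (9,21)$, and $M(v_1^4\eta^3) = \eta q$ with $(n,m) = (11,33)$, using $|\eta| = 1$, $|\nu| = 3$, $|\sigma| = 7$, $|v_1^4| = 8$, $|\eta_4| = 16$, $|\nu^*| = 18$, and $|q| = 32$. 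In each case $\beta$ is the nontrivial element of $\coker J_m$ named in the statement, so there is an exotic sphere $\Sigma_1 = \Sigma^m$ with $\cp(\Sigma_1) = \beta$.

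First I would pin down the fixed-point sphere $\Sigma_0$, i.e.\ a framed sphere with $\cp(\Sigma_0) = \alpha$. The framings of the standard sphere $S^n$ realize precisely the subgroup $\im J_n \subseteq \pi_n^s$. Since $\pi_3^s = \z/24$, $\pi_7^s = \z/240$, and $\pi_{11}^s = \z/504$ are each equal to $\im J$, the source classes $\eta^3$, $4\sigma$, and $v_1^4\eta^3$ are realized by framings on the standard spheres, and we may take $\Sigma_0 = S^3$, $S^7$, and $S^{11}$ respectively. The class $\eta^2\sigma \in \pi_9^s$ is nonzero in $\coker J_9$, so no framing on $S^9$ realizes it; any $\Sigma_0$ with $\cp(\Sigma_0) = \eta^2\sigma$ therefore has exotic underlying sphere, which is the $M^9$ of the statement.

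Next I would check the two hypotheses of \cref{Thm:SS}(2). The inequalities $m > 2n+1$ read $9 > 7$, $17 > 15$, $21 > 19$, and $33 > 23$, all valid. For the congruence condition, in every case $m-n \in \{6,10,12,22\}$ is even and $m \in \{9,17,21,33\}$ is $\equiv 1 \bmod 4$, so the second of the two alternative conditions holds (the first, requiring $m$ and $m-n$ both odd, never applies here). Part (2) of \cref{Thm:SS} then yields, after possibly forming the connected sum with some $\Sigma' \in \Theta^{bp}_{m+1}$, a smooth $\z/2$-action on $\Sigma_1 \# \Sigma'$ with fixed point set $\Sigma_0$, which is the claimed involution.

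The only genuinely homotopy-theoretic input, and the step I would be most careful with, is the identification of $\Sigma_0$: one needs the computations of $\im J$ in stems $3$, $7$, and $11$ to guarantee that three of the fixed-point sets are standard spheres, and one needs $\eta^2\sigma$ to survive to a nonzero class in $\coker J_9$ to recognize that the fixed-point set in the dimension-$21$ case is exotic. Everything else reduces to recording the stems and checking the elementary arithmetic of the dimension and congruence conditions.
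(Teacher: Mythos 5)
Your argument is correct and is exactly the one the paper intends: the corollary is a direct application of the $p=2$ Mahowald invariant computations in \cref{Thm:Low2} to part (2) of \cref{Thm:SS}, and your verification of the stems, the inequalities $m>2n+1$, and the congruences $m\equiv 1 \bmod 4$ with $m-n$ even matches the (unwritten) proof. Your identification of the fixed-point sets --- standard spheres in stems $3$, $7$, $11$ because those stems are entirely in the image of $J$, and the exotic $M^9$ because $\eta^2\sigma$ is nonzero in $\coker J_9$ --- is also the right justification for the one genuinely nontrivial point.
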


\subsection{Further remarks}\label{SS:Further}

We close by mentioning some directions for follow-up work and limitations of these techniques. 

\begin{remark}
Schultz mentions \cite[Pg. 260]{Sch85} the possibility of applying \cref{Thm:SS} to Mahowald invariants at higher chromatic heights. Mahowald and Ravenel state \cite{MR93} that $M(\beta_1) = \beta_1^p$, and outline an approach to showing $M(\beta_i) = \gamma_i$ for $i \geq 2$ and $p \geq 7$.\footnote{Here, the restriction $p \geq 7$ ensures that $\gamma_i$ is defined via the work of Miller--Ravenel--Wilson \cite{MRW77}.} Assuming this is true, one obtains nontrivial smooth $S^1$-actions on some additional exotic spheres. 
\end{remark}

\begin{remark}
Many exotic spheres are detected in $\coker(J)$ by divided Greek letter elements $\beta_{kp/i}$ (see, for instance, \cite[Sec. 3]{Beh07}). For example, the elements $\beta_{6/3} \in \coker(J_{82})_{(3)}$ and $\beta_{6/2} \in \coker(J_{86})_{(3)}$ detect exotic $82$- and $86$-spheres, respectively, for which a nontrivial $\z/3$-action is not guaranteed by \cref{Thm:Manyp}. As mentioned in \cref{Thm:Sad}, Sadofsky showed in \cite{Sad92} that $M(\alpha_{p/2}) = \beta_{p/2}$ for all primes $p \geq 5$. It seems plausible that $M(\alpha_{kp/i}) = \beta_{kp/i}$ for larger $k$ and $i$; if this were true, one could produce additional infinite families of nontrivial $\z/p$-actions on exotic spheres whose order in $\Theta_n$ is divisible by $p$. 
\end{remark}

\begin{remark}
Belmont and Isaksen \cite{BI22} have recently introduced some promising techniques for computing $2$-primary Mahowald invariants. It would be interesting to see how far these ideas can be pushed and their consequences for nontrivial smooth involutions on exotic spheres. 
\end{remark}


\begin{remark}[Limitations]
Fix a prime $p$. Let $G_k := (\pi_k^s)_{(p)}$, and let $R_k \subseteq G_k$ denote the subgroup generated by classes which are Mahowald invariants. In \cite[Conj. 1.13]{MR93}, Mahowald and Ravenel conjecture that
$$\lim_{k \to \infty} \dfrac{\log_p |R_k|}{\log_p |G_k|} = \dfrac{1}{p^2},$$
assuming that $\log_p |G_k|$ grows linearly in $k$. Burklund \cite{BHS22, Bur22} has recently shown that $\log_p |G_k|$ grows sublinearly, so one might expect that the limit above approaches $1/p$ instead of $1/p^2$. If this is true, then \cref{Thm:SS} can be applied to roughly $1$ out of every $p$ exotic spheres from $\coker J$ if $p$ is odd (and some smaller proportion if $p=2$). 
\end{remark}


\appendix

\section{Tables of nontrivial $\t$- and $\z/p$-actions in low dimensions}\label{App:Tables}

In this appendix, we compute the prime factors of $\coker(J_n)$ and $\Theta_n^{bp}$ in all dimensions $n \leq 100$ where exotic spheres are known to exist (see \cite{BHHM20} for a list up to dimension $140$). 

The prime factors of $\coker(J_n)$ follow directly from inspection of the $p$-local stable homotopy groups of spheres. Note that $\coker(J_n)_{(p)} = 0$ for $n \leq 100$ if $p \geq 11$, so we only need to examine $p \in \{2,3,5,7\}$:
\begin{itemize}
\item For $p=2$, this follows from recent work of Isaksen--Wang--Xu \cite{IWX20}, $n \leq 95$, and from \cite{BHHM20, BMQ22} for $96 \leq n \leq 100$.  
\item For $p \in \{3,5\}$, we use Ravenel's extensive Adams--Novikov spectral sequence computations \cite[Thms. 7.5.3, 7.6.5]{Rav86}.
\item For $p=7$, the only element in $\coker(J)_n$ with $n\leq 100$ is $\beta_1 \in \coker(J_{82})$. 
\end{itemize}

The prime factors of $\Theta_n^{bp}$ follow from \cref{Thm:Levine}. Since $\Theta_n^{bp}=0$ if $n$ is even, prime factors only appear when $n$ is odd. We can further reduce to the study of odd prime factors, since $2$ divides $|\Theta_n^{bp}|$ for all odd $n \geq 7$, except in the exceptional cases $n \in \{1,5,13,29,61\}$. The odd prime factors of $t_k$ were determined using Mathematica. 

We add an asterisk whenever some exotic sphere whose order is divisible by $p$ admits a $\t$- or $\z/p$-action via \cref{Thm:SS}. For example, for $n=9$, the prime $2$ divides the order of $\coker(J_9)$, but the element $\nu^3 \in \coker(J_9)$ is a Mahowald invariant (\cref{Thm:Low2}) for which \cref{Thm:SS} applies. Thus we have `$2^*$' in the row $n=9$ and second column, instead of just `$2$'. 

Finally, we added two asterisks to the `$2$' in the $n=30$ row, since the only nontrivial element in $\coker(J_{30})_{(2)}$ is $\theta_4$, which has Kervaire invariant one and thus does not detect an exotic sphere.

\begin{figure}[!htb]
\centering
\begin{tabular}{ | c | c | c | }
\hline
n & prime factors of & prime factors of $\Theta_n^{bp}$ \\
    &  $\coker(J_n)$   & \\
\hline
7 & 2 & 2 \\
\hline
8 & 2 & \\
\hline
9 & 2* & 2 \\
\hline
10 & 2, 3* & \\
\hline
11 & 2 & 2, 31 \\
\hline
13 & 3 & \\
\hline
14 & 2 & \\
\hline
15 & 2 & 2, 127 \\
\hline
16 & 2 & \\
\hline
17 & 2* & 2 \\
\hline
18 & 2 &  \\
\hline
19 & 2 & 2, 7, 73 \\
\hline
20 & 2, 3 &  \\
\hline
21 & 2* & 2 \\
\hline
22 & 2 & \\
\hline
23 & 2, 3* & 2, 23, 89, 691 \\
\hline
24 & 2 & \\
\hline
25 & 2 & 2 \\
\hline
26 & 2, 3 & \\
\hline 
27 & 2 & 2, 8191 \\
\hline
28 & 2 & \\
\hline
29 & 3 & \\
\hline
30 & 2**, 3 &  \\
\hline
31 & 2 & 2, 7, 31, 151, 3617 \\
\hline
32 & 2 &  \\
\hline
33 & 2* & 2 \\
\hline
34 & 2 & \\
\hline
35 & 2 & 2, 43867 \\
\hline
36 & 2, 3 &  \\
\hline
37 & 2, 3 & 2 \\
\hline
38 & 2, 3*, 5* &  \\
\hline
39 & 2, 3 & 2, 283, 617 \\
\hline
40 & 2, 3 & \\
\hline
41 & 2 & 2 \\
\hline
42 & 2, 3* & \\
\hline
43 & 2 & 2, 7, 127, 131, 337, 593 \\
\hline
44 & 2 & \\
\hline
45 & 2, 3, 5 & 2 \\
\hline
46 & 2, 3 & \\
\hline
47 & 2, 3 & 2, 47, 103, 178481, 2294797 \\
\hline
48 & 2 & \\
\hline
49 & 2, 3 & 2\\
\hline
50 & 2, 3* & \\
\hline
\end{tabular}
\caption{The prime factors of $\Theta_n^{bp}$, $7 \leq n \leq 50$. The trivial group $\Theta_{12}$ is omitted.}
\end{figure}

\begin{figure}[!htb]
\centering
\begin{tabular}{ | c | c | c | }
\hline
n & prime factors of & prime factors of $\Theta_n^{bp}$ \\
    &  $\coker(J_n)$   & \\
\hline
51 & 2 & 2, 657931 \\ 
\hline
52 & 2, 3 & \\ 
\hline
53 & 2 & 2 \\ 
\hline
54 & 2 & \\ 
\hline
55 & 2, 3 & 2, 7, 73, 9349, 262657, 362903 \\ 
\hline
57 & 2 & 2 \\ 
\hline
58 & 2 & \\ 
\hline
59 & 2 & 2, 233, 1103, 1721, 2089, 1001259881  \\ 
\hline
60 & 2 & \\  
\hline
62 & 2, 3 & \\ 
\hline
63 & 2 & 2, 37, 683, 305065927, 2147493647 \\ 
\hline
64 & 2 & \\ 
\hline
65 & 2, 3 & 2 \\ 
\hline
66 & 2 & \\ 
\hline
67 & 2 & 2, 7, 23, 89, 5999479, $\num(B_{34}/34)$ \\ 
\hline
68 & 2, 3 & \\ 
\hline
69 & 2 & 2 \\ 
\hline
70 & 2 & \\ 
\hline
71 & 2 & 2, 31, 71, 127, 122921, $\num(B_{36}/36)$ \\ 
\hline
72 & 2, 3 & \\ 
\hline
73 & 2 & 2 \\ 
\hline
74 & 2, 3* & \\ 
\hline
75 & 2, 3 & 2, 223, 616318177, $\num(B_{38}/38)$ \\ 
\hline
76 & 2, 5& \\ 
\hline
77 & 2 & 2 \\ 
\hline
78 & 2, 3 & \\ 
\hline
79 & 2 & 2, 7, 79, 8191, 121369, 137616929, 1897170067619 \\ 
\hline
80 & 2 & \\ 
\hline
81 & 2, 3 & 2 \\ 
\hline
82 & 2, 3, 7* & \\ 
\hline
83 & 2, 5 & 2, 13367, 164511353, $\num(B_{42}/42)$ \\ 
\hline
84 & 2, 3 & \\ 
\hline
85 & 2, 3 & 2 \\ 
\hline
86 & 2, 3*, 5* & \\ 
\hline
87 & 2 & 2, 59, 431, 8089, 9719, 2099863, 2947939, 1798482437 \\ 
\hline
88 & 2 & \\ 
\hline
89 & 2 & 2 \\ 
\hline
90 & 2, 3* & \\ 
\hline
91 & 2, 3 & 2, 7, 31, 73, 151, 631, 23311 \\ 
    &        & 383799511, 67568238839737 \\
\hline
92 & 2, 3 & \\ 
\hline
93 & 2, 3, 5 & 2 \\ 
\hline
94 & 2, 3 & \\ 
\hline
\end{tabular}
\caption{The prime factors of $\Theta_n^{bp}$, $51 \leq n \leq 94$. The trivial groups $\Theta_{56}$ and $\Theta_{61}$ are omitted.}
\end{figure}

\clearpage

\begin{figure}[!htb]
\centering
\begin{tabular}{ | c | c | c | }
\hline
n & prime factors of & prime factors of $\Theta_n^{bp}$ \\
    &  $\coker(J_n)$   & \\
\hline
95 & 2, 3 & 2, 653, 2351, 4513, 56039, 10610063, 13264529, 31184907679, \\
    & 	 &  59862819377, 140737488355327, 153298748932447906241 \\ 
\hline
96 & 2 & \\ 
\hline
97 & 2 & 2 \\ 
\hline
98 & 2 & \\ 
\hline
99 & 2, 3 & 2, 127, 417202699, 4432676798593, \\
	& 	& 	562949953421311, 47464429777438199 \\ 
\hline
100 & 2, 3 & \\ 
\hline
\end{tabular}
\caption{The prime factors of $\Theta_n^{bp}$, $90 \leq n \leq 100$.}
\end{figure}

\bibliographystyle{alpha}
\bibliography{master}

\end{document}